\theoremstyle{plain}
\newtheorem{thm}{Theorem}
\newtheorem{prop}[thm]{Proposition}
\newtheorem{lem}[thm]{Lemma}
\newtheorem{cor}[thm]{Corollary}
\newcounter{claimcounter}[thm]
\newenvironment{claim}{\stepcounter{claimcounter}{{\bf Claim \theclaimcounter.}}}{}
\theoremstyle{definition}
\newtheorem{df}[thm]{Definition}
\newtheorem{pb}[thm]{Problem}
\newtheorem{exm}[thm]{Example}
\renewcommand{\leq}{\leqslant}
\newcommand{\<}{\langle}
\newcommand{\la}{\langle}
\renewcommand{\>}{\rangle}
\newcommand{\ra}{\rangle}
\newcommand{\rto}{\rightarrow}
\newcommand{\G}{\mathcal G}
\newcommand{\C}{\mathcal C}
\newcommand{\CoX}{\Co(R^n,G)}
\DeclareMathOperator{\Co}{Co}
\DeclareMathOperator{\Cld}{Cl}
\DeclareMathOperator{\Cl}{Cl}
\newcommand{\jsd}{join-sem\-i\-dis\-trib\-u\-tive}
\newcommand{\op}{\operatorname}
\begin{document}

\title{Optimum basis of finite convex geometry}
\author {K. Adaricheva}
\address{Department of Mathematical Sciences, Yeshiva University, 245 Lexington ave., New York, NY 10016}
\email{adariche@yu.edu}
\address{Department of Mathematics, School of Science and Technology, Nazarbayev University,
53 Kabanbay Batyr ave., Astana, 010000 Republic of Kazakhstan}
\email{kira.adaricheva@nu.edu.kz}

\thanks{The author was partially supported by AWM-NSF Mentor Travel grant N0839954.}
\keywords{Convex geometry, anti-exchange closure operator, affine convex geometry, system of implications, canonical basis, optimum basis, minimum basis, acyclic Horn Boolean functions, minimum CNF-representation, minimum representations of acyclic hypergraphs, Horn rules of antimatroid, supersolvable lattice}
\subjclass[2010]{05A05, 06B99, 52B05, 06A15, 08A70}

\begin{abstract}
Convex geometries form a subclass of closure systems with unique criticals, or $UC$-systems. We show that the $F$-basis introduced in \cite{AN12} for $UC$-systems, becomes optimum in convex geometries, in two essential parts of the basis: right sides (conclusions) of binary implications and left sides (premises) of non-binary ones. The right sides of non-binary implications can also be optimized, when the convex geometry either satisfies the Carousel property, or does not have $D$-cycles. The latter generalizes a result of P.L.~Hammer and A.~Kogan for acyclic Horn Boolean functions. Convex geometries of order convex subsets in a poset also have tractable optimum basis. The problem of tractability of optimum basis in convex geometries in general remains  to be open.

\end{abstract}

\maketitle
\section{Introduction}
A convex geometry is a closure system with the anti-exchange axiom.

In this paper we look at representation of \emph{finite} convex geometries by the implicational bases. This continues a series of papers \cite{ANR11} and \cite{AN12} that translate the approaches of compact presentation of finite lattices into the realm of Horn propositional logic.

If $\Sigma=\{X_i\rto Y_i: i\leq k\}$ is a set of implications defining a convex geometry, then the size of $\Sigma$ is defined as $s(\Sigma)=|X_1|+\ldots +|X_k|+|Y_1|+\ldots +|Y_k|$. The set of implications $\Sigma$ is called \emph{optimum}, when $s(\Sigma)$ is minimum among all possible sets of implications defining convex geometry.

In this paper we address the following question: \emph{ if a convex geometry is given by a set of implications $\Sigma$, is it possible to find its optimum basis $\Sigma_O$ in time polynomially dependable on $s(\Sigma)$?}

 D.~Maier \cite{Mai} showed that the problem of finding the optimum basis for the \emph{general closure system}, defined by a set of implications, is NP-complete, thus, the question above most likely is answered in negative. On the other hand, some special classes of closure systems may have tractable optimum bases. These are, for example, closure systems with the modular closure lattices, as shown by M.~Wild \cite{W00}, or quasi-acyclic closure systems, as shown by P. L.~Hammer and A.~Kogan \cite{HK}. Note that the latter paper deals with Horn boolean functions and their optimal CNF-representation, and there are several variations of optimization parameters. This is further discussed in  K.~Adaricheva and J.B.~Nation \cite{AN12}.

In this paper we demonstrate three important sub-classes of convex geometries where the tractable optimum basis exists: the class of geometries satisfying the $n$-Carousel property, order convex subsets of posets and convex geometries without $D$-cycles. If the first class includes all \emph{affine} convex geometries, the third one is the generalization of \emph{acyclic} closure systems of \cite{HK}, $G$-\emph{geometries} of M. Wild \cite{W94} and (dual) supersolvable anti-matroids of D.~Armstrong \cite{Arm09}. We show that a convex geometry without $D$-cycles has the tractable optimum basis, which is exactly basis $\Sigma_{FOE}$ defined in \cite{AN12}.
We note that all three classes differ from another tractable class, $\emph{component-quadratic}$ closure systems, that generalize quasi-acyclic closure systems, see E. Boros et al \cite{BCKK}.
  
\section{Preliminaries} 
A closure system $\G=\<G, \phi\>$, i.e. a set $G$ with a closure operator $\phi :2^G\rightarrow 2^G$, is called \emph{a convex geometry} (see \cite{AGT03}), if it is a zero-closed space (i.e. $\phi (\emptyset)=\emptyset$) and it satisfies \emph{the anti-exchange axiom}, i.e.
\[
\begin{aligned}
x\in\phi(X\cup\{y\})\text{ and }x\notin X
\text{ imply that }y\notin\phi(X\cup\{x\})\\
\text{ for all }x\neq y\text{ in }G\text{ and all closed }X\subseteq G.
\end{aligned}
\]

In this paper we consider only \emph{finite} convex geometries, i.e. geometries with $|G|<\omega$.

It is worth noting that convex geometries are always \emph{standard} closure systems, i.e. they satisfy property

\[
\phi(\{i\})\setminus \{i\} \text{ is closed, for every } i \in G.
\]

This condition, in particular,  implies $i=j$, whenever  $\phi(\{i\})=\phi(\{j\})$,  for any $i,j \in G$.

Very often, a convex geometry is given by its collection of closed sets. There is a convenient description of those collections of subsets of a given finite set $G$, which are, in fact, the closed sets of a convex geometry on $G$: if $\mathcal{F} \subseteq 2^G$ satisfies\\
(1) $\emptyset \in \mathcal{F}$;\\
(2) $X\cap Y \in \mathcal{F}$, as soon as $X,Y \in \mathcal{F}$;\\
(3) $X \in \mathcal{F}$ and $X\not = G$ implies $X \cup \{a\} \in \mathcal{F}$, for some $a \in G\setminus X$,\\
then $\mathcal{F}$ represents the collection of closed sets of a convex geometry $\G=\<G,\phi\>$.

As for any closure system, the closed sets of convex geometry form a lattice, which is usually called the \emph{closure lattice} and denoted $\op{Cl}(G,\phi)$. 
The closure lattices of convex geometries have various characterizations, and are usually called \emph{ locally distributive} in the lattice literature. 

A reader can be referred to \cite{D},\cite{EdJa} and \cite{Mo1} for the further details of combinatorial and lattice-theoretical aspects of finite convex geometries.

If $Y\subseteq \phi(X)$, then this relation between subsets $X, Y \subseteq G$ in a closure system can be written in the form of implication: $X \rightarrow Y$. Thus, the closure system $\langle G,\phi\rangle$ can be given by the set of implications:
\[ 
\Sigma_\phi = \{X \rightarrow Y: X \subseteq G \text{ and } Y \subseteq \phi(X)\}.
\]

The set $X$ is called the \emph{premise}, and $Y$ the \emph{conclusion} of an implication $X\rto Y$.  We will assume that any implication $X\rto Y$ is an ordered pair of non-empty subsets $X,Y \subseteq G$, and $Y\cap X = \emptyset$.

Conversely, any set of implications $\Sigma$ defines a closure system: the closed sets are exactly subsets
$Z\subseteq G$ that \emph{respect} the implications from $\Sigma$, i.e., if $X \rightarrow Y$ is in $\Sigma$, and $X \subseteq Z$, then $Y \subseteq Z$. There are numerous ways to represent the same closure system by sets of implications; those sets of implications with some minimality property are called \emph{bases}.  Thus we can speak of various sorts of bases.

As in K.~Adaricheva et al \cite{ANR11}, we will call subset $\Sigma^b=\{(A\rto B)\in \Sigma : |A|=1\}$ 
of given basis $\Sigma$ the \emph{binary part} of the basis. Since every convex geometry $\< G, \phi\>$ is a standard closure system, the binary relation $\geq_\phi$ on $G$ defined as:
\[ a\geq_\phi b \text{  iff  } b \in \phi(\{a\})
\]
is a partial order. This is exactly the partial order of join irreducible elements in $L=\op{Cl}(G,\phi)$. If $a\geq_\phi b$, for $a\not = b$, then every basis of the closure system will contain an implication $a\rto B$ (where $b$ may or may not be in $B$). The \emph{non-binary} part of $\Sigma$ is $\Sigma^{nb}=\Sigma \setminus \Sigma^{b}$. 

We write $|\Sigma|$ for the number of implications in $\Sigma$. 
Basis $\Sigma$ is called \emph{minimum}, if $|\Sigma|\leq |\Sigma^*|$, for any other basis $\Sigma^*$ of the same system.

Number $s(\Sigma)=|X_1|+\ldots + |X_n|+|Y_1|+\ldots +|Y_n|$ is called the \emph{size} of the basis $\Sigma$. 
A basis $\Sigma$ is called \emph{optimum} if $s(\Sigma)\leq s(\Sigma^*)$, for any other basis $\Sigma^*$ of the system. Similarly, one can define $s_L(\Sigma)=|X_1|+\ldots + |X_n|$, the $L$-size, and $s_R(\Sigma)= |Y_1|+\ldots +|Y_n|$, the $R$-size, of a basis $\Sigma$. The basis will be called \emph{left-side optimum} (resp.~\emph{right-side optimum}), if 
$s_L(\Sigma)\leq s_L(\Sigma^*)$ (resp.~$s_R(\Sigma)\leq s_R(\Sigma^*)$), for any other basis $\Sigma^*$.

Now we recall the major theorem of  V.~Duquenne and J.L.~Guigues about the canonical basis \cite{DG}, also see N.~Caspard and B.~Monjardet \cite{CM03}. 

A set $Q\subseteq G$ is called \emph{quasi-closed} for $\<G,\phi\>$, if 
\begin{itemize}
\item[(1)] $Q$ is not closed;
\item[(2)] $Q\cap X$ is closed, for every closed $X$, when $Q\not \subseteq X$.
\end{itemize}
In other words, adding $Q$ to the family of $\phi$-closed sets, makes another family of sets closed stable under the set intersection, thus, a family of closed sets of some closure operator.

A quasi-closed set $C$ is called \emph{critical}, if it is minimal, with respect to the containment order, among all quasi-closed sets with the same closure. 
Equivalently, if $Q\subseteq C$ is another quasi-closed set and $\phi(Q)=\phi(C)$, then $Q=C$.

Let $\mathcal{Q}$ be the set of all quasi-closed sets and $\mathcal{C}\subseteq \mathcal{Q}$ be the set of critical  sets of the closure system $\<G,\phi\>$. 
Subsets of the form $\phi(C)$, where $C \in \mathcal{C}$, are called \emph{essential}.
It can be shown that adding all quasi-closed sets to closed sets of $\<G,\phi\>$ one obtains a family of subsets stable under the set intersection, thus, a new closure operator $\sigma$ can be defined. This closure operator associated with $\phi$ is called the \emph{saturation} operator. In other words, for every $Y\subseteq G$, $\sigma(Y)$ is the smallest set containing $Y$ which is either quasi-closed or closed.

\begin{thm}\label{DG}\textup{(}\cite{DG}, see also \cite{W94}.\textup{)}
Let $\phi$ be a closure operator on set $G$, and let $\sigma$ be saturation operator associated with it. Consider the set of implications $\Sigma_C=\{C\rto ( \phi(C)\setminus C): C \in \mathcal{C}\}$. Then
\begin{itemize}
\item[(1)]  $\Sigma_C$ is a minimum basis for $\langle G, \phi\rangle$.
\item[(2)] For every other basis $\Sigma$ of $\langle G, \phi\rangle$, for every $C\in \mathcal{C}$, there exists $(U\rto V)$ in $\Sigma$ such that $\sigma(U)=C$.
\item[(3)] Fix $C\in \mathcal{C}$ and let $\Sigma'=\{(U\rto V) \in  \Sigma_C: \phi(U)=\phi(C)\}$. Then, for any $W\subseteq C$ with $\sigma(W)=C$, the implication $W\rto \sigma(W)$ follows from $\Sigma_C\setminus \Sigma'$. 
\end{itemize}
\end{thm}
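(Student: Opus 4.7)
The plan is to establish the three parts with (2) as the crux; the minimality half of (1) will follow from (2). First I verify that $\Sigma_C$ is a basis. Soundness is immediate: every $\phi$-closed $X$ containing a critical $C$ contains $\phi(C) \subseteq \phi(X) = X$. For completeness, I would argue by strong induction on $|X|$ that if $X$ respects $\Sigma_C$ then $X$ is $\phi$-closed. If $X$ is quasi-closed but not closed, a minimal (critical) quasi-closed $C_0 \subseteq X$ with $\phi(C_0) = \phi(X)$ directly contradicts $X$'s respect of $C_0 \rto \phi(C_0)\setminus C_0$. If $X$ is neither closed nor quasi-closed, pick closed $Y$ with $X \not\subseteq Y$ and $X \cap Y$ not closed, check that $X \cap Y$ still respects $\Sigma_C$, and apply induction to produce a critical $C_0 \subseteq X \cap Y$ with $\phi(C_0) \not\subseteq X \cap Y$; since $C_0 \subseteq Y$ closed forces $\phi(C_0) \subseteq Y$, this yields $\phi(C_0) \not\subseteq X$, contradicting $X$'s respect of the same implication.

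For (2), fix $C \in \mathcal{C}$ and suppose for contradiction no $(U \rto V) \in \Sigma$ has $\sigma(U) = C$. I would show that every $(U \rto V) \in \Sigma$ with $U \subseteq C$ has $V \subseteq C$, contradicting that the closure of $C$ under $\Sigma$ equals $\phi(C) \supsetneq C$. Since $U \subseteq C$ gives $\sigma(U) \subseteq \sigma(C) = C$ and $\sigma(U) \ne C$ by assumption, two cases arise. If $\sigma(U)$ is closed, then $\sigma(U) = \phi(U)$, so $V \subseteq \phi(U) = \sigma(U) \subseteq C$. If $\sigma(U)$ is quasi-closed, then $\sigma(U) \subsetneq C$, and criticality of $C$ forces $\phi(\sigma(U)) \subsetneq \phi(C)$; hence $C \not\subseteq \phi(\sigma(U))$, and quasi-closure of $C$ makes $C \cap \phi(\sigma(U))$ closed. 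Containing $\sigma(U)$, this closed set must contain $\phi(\sigma(U))$, so $\phi(\sigma(U)) \subseteq C$ and $V \subseteq \phi(U) \subseteq \phi(\sigma(U)) \subseteq C$. Minimality of $\Sigma_C$ in (1) then follows: (2) supplies, for each basis $\Sigma$, an injection $\mathcal{C} \to \Sigma$ via $C \mapsto (U_C \rto V_C)$, whence $|\Sigma| \geq |\mathcal{C}| = |\Sigma_C|$.

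For (3), fix $C \in \mathcal{C}$ and $W \subseteq C$ with $\sigma(W) = C$, and let $\bar{W}$ denote the closure of $W$ under $\Sigma_C \setminus \Sigma'$. It suffices to show $\bar{W} \in \mathcal{F} \cup \mathcal{Q}$, for then $\sigma(\bar{W}) = \bar{W}$ and $C = \sigma(W) \subseteq \sigma(\bar{W}) = \bar{W}$. Suppose $\bar{W}$ is neither closed nor quasi-closed; pick closed $Y$ with $\bar{W} \not\subseteq Y$ and $\bar{W} \cap Y$ not closed. Applying the completeness argument from (1) to $\bar{W} \cap Y$ yields a critical $C_0 \subseteq \bar{W} \cap Y$ with $\phi(C_0) \not\subseteq \bar{W} \cap Y$; closedness of $Y$ gives $\phi(C_0) \subseteq Y$, so $\phi(C_0) \not\subseteq \bar{W}$, and since $\bar{W}$ respects $\Sigma_C \setminus \Sigma'$, the implication $C_0 \rto \phi(C_0)\setminus C_0$ must lie in $\Sigma'$, that is, $\phi(C_0) = \phi(C)$. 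Then $C \subseteq \phi(C) = \phi(C_0) \subseteq Y$, so $W \subseteq C \subseteq Y$, giving $W \subseteq \bar{W} \cap Y$. A routine check shows $\bar{W} \cap Y$ respects $\Sigma_C \setminus \Sigma'$, so minimality of $\bar{W}$ forces $\bar{W} \subseteq \bar{W} \cap Y \subseteq Y$, contradicting $\bar{W} \not\subseteq Y$. The main obstacle is seeing that the completeness machinery from (1) transplants to the restricted-basis setting of (3), turning a delicate combinatorial question into a clean structural contradiction.
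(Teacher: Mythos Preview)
The paper does not actually prove this theorem: it is stated as a classical result attributed to Guigues--Duquenne \cite{DG} (with a reference also to Wild \cite{W94}) and is used without proof as background for the later sections. So there is no in-paper argument to compare your proposal against.

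That said, your proof is essentially correct and follows the standard line for this result. One small muddle: in the completeness half of (1), the sentence ``check that $X\cap Y$ still respects $\Sigma_C$, and apply induction to produce a critical $C_0\subseteq X\cap Y$ with $\phi(C_0)\not\subseteq X\cap Y$'' is self-contradictory as written. If $X\cap Y$ respects $\Sigma_C$, then by the inductive hypothesis $X\cap Y$ is $\phi$-closed, which already contradicts your choice of $Y$; you do not need to (and cannot) then produce a violated implication inside $X\cap Y$. Either stop the argument at that contradiction, or drop the ``respects'' clause and use the contrapositive of the induction (since $X\cap Y$ is not closed, some critical $C_0\subseteq X\cap Y$ has $\phi(C_0)\not\subseteq X\cap Y$, hence $\phi(C_0)\not\subseteq X$). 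Both routes work; just pick one. Part (2) and the deduction of minimality from it are clean, and your argument for (3) is correct once one reads ``applying the completeness argument from (1)'' as ``using that $\Sigma_C$ is a basis, so a non-closed set violates some $C_0\to\phi(C_0)\setminus C_0$''.
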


Basis $\Sigma_C$ described in Theorem \ref{DG} is called \emph{canonical}.

Some consequences can be proved from this result about the optimum basis: the premise of every implication has a \emph{fixed size} $k_C$, $C \in \mathcal{C}$, that does not depend on the choice of the optimum basis.  This makes it into a parameter of the closure system itself. Recall that a basis is \emph{non-redundant}, if none of its implications can be removed so that remaining set of implications still defines the same closure system.\footnote{Every minimum basis is non-redundant, while non-redundant basis may not be minimum.}

\begin{thm} \cite{W94} \label{W}
Let $\<G,\phi\>$ be a closure system.
\begin{itemize}
\item[(I)] If $\Sigma'$ is a non-redundant basis, then $\{\sigma(U): (U\rto V) \in \Sigma'\}\subseteq \mathcal{Q}$.
\item[(II)] Let $\Sigma_O$ be an optimum basis. For any critical set $C$, let $X_C\rto Y_C$ be an implication from this basis with $\sigma(X_C)=C$. Then $|X_C|=k_C:=min\{|U|: U\subseteq C, \phi(U)=\phi(C)\}=min\{|U|: U\subseteq C, \sigma(U)=C\}$.
\end{itemize}
\end{thm}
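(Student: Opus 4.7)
The plan is to treat (I) as a redundancy statement and (II) as an exchange argument, both exploiting the basis-independence of $\sigma$ and the fact that the $\sigma$-closed sets are exactly the $\phi$-closed sets together with $\mathcal{Q}$.

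For (I), I would argue the contrapositive. Let $\Sigma'$ be a non-redundant basis and $(U \rto V) \in \Sigma'$. Suppose $\sigma(U)$ is not quasi-closed; since $\sigma(U)$ is always $\sigma$-closed, it must then be $\phi$-closed, so $\sigma(U) = \phi(U)$. Let $\psi$ be the closure operator defined by $\Sigma' \setminus \{U \rto V\}$, so $\psi \leq \phi$. I aim to show $\psi = \phi$, contradicting non-redundancy. The strategy is to show $\psi(U)$ is already $\phi$-closed, i.e., $V \subseteq \psi(U)$. If not, $\psi(U)$ is a $\psi$-closed set lying strictly between $U$ and $\phi(U)$; I would then argue $\psi(U)$ must be quasi-closed (using intersection-stability of $(\phi\text{-closed}) \cup \mathcal{Q}$ to force any stall of forward chaining to land on a $\sigma$-closed set), contradicting $\sigma(U) = \phi(U)$. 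Once $\psi(U) = \phi(U)$, the redundancy of $U \rto V$ on arbitrary inputs follows: whenever a forward chain under $\Sigma'$ invokes $U \rto V$, the premise $U$ is already present, so one may substitute the sub-chain from $U$ under $\Sigma' \setminus \{U \rto V\}$, which already reaches $V$.

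For (II), I first check the two descriptions of $k_C$ coincide. If $U \subseteq C$ with $\phi(U) = \phi(C)$, then $\sigma(U) \subseteq C$ (as $C$ itself is $\sigma$-closed); since $C$ is not $\phi$-closed, $\sigma(U) \neq \phi(C)$, so $\sigma(U)$ is a quasi-closed subset of $C$ with closure $\phi(C)$, whence minimality of the critical set $C$ gives $\sigma(U) = C$; the reverse direction is immediate. Now assume for contradiction $|X_C| > k_C$, pick $U' \subseteq C$ with $|U'| = k_C$ and $\sigma(U') = C$, and form $\Sigma_O^* = (\Sigma_O \setminus \{X_C \rto Y_C\}) \cup \{U' \rto Y_C\}$, which has strictly smaller size. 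To verify $\Sigma_O^*$ is still a basis for $\phi$: every $\phi$-closed $F$ respects $U' \rto Y_C$ because $U' \subseteq F$ forces $\phi(U') = \phi(C) \subseteq F \supseteq Y_C$; conversely, any $\Sigma_O^*$-closed $F$ with $X_C \subseteq F$ contains the forward chain of $X_C$ under $\Sigma_O \setminus \{X_C \rto Y_C\}$, which by the saturation lemma below equals $\sigma(X_C) = C \supseteq U'$, so the new implication $U' \rto Y_C$ fires and yields $Y_C \subseteq F$. This produces a basis smaller than $\Sigma_O$, contradicting optimality.

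The main technical obstacle, shared by both parts, is the following saturation lemma: if $A \rto B$ is removed from a non-redundant basis $\Sigma'$ of $\phi$, then the $(\Sigma' \setminus \{A \rto B\})$-closure of $A$ equals $\sigma(A)$. I expect this to rest on Theorem~\ref{DG}(2), which ensures each critical set has a distinct implication responsible for it, together with the intersection-stability of $(\phi\text{-closed}) \cup \mathcal{Q}$, which forces every stall of forward chaining to occur at a $\sigma$-closed set. Once this lemma is in place, the stall argument for (I) and the closed-set verification in (II) both go through with only bookkeeping left.
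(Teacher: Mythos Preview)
The paper does not prove this theorem at all: it is stated with a citation to \cite{W94} and immediately followed by the next result, so there is no ``paper's own proof'' to compare against. That said, your outline is essentially a correct reconstruction of the standard argument.

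A few remarks on the details. For (I), your core step---that if $\psi$ denotes the closure for $\Sigma'\setminus\{U\rto V\}$ and $\psi(U)\neq\phi(U)$ then $\psi(U)$ is quasi-closed---is exactly right, and the verification is direct: for any $\phi$-closed $X$ with $\psi(U)\not\subseteq X$ one cannot have $U\subseteq X$ (else $\psi(U)\subseteq\phi(U)\subseteq X$), so $U\not\subseteq\psi(U)\cap X$ and the intersection, being $\psi$-closed, is already $\phi$-closed. This yields $\sigma(U)\subseteq\psi(U)\subsetneq\phi(U)$, contradicting $\sigma(U)=\phi(U)$. Once $\psi(U)=\phi(U)$, redundancy is immediate because any $\psi$-closed set containing $U$ then contains $V$.

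For (II), your exchange argument is sound, but note that you only need the containment $\sigma(X_C)\subseteq\psi(X_C)$, not the full equality in your ``saturation lemma.'' The containment follows from the same quasi-closedness observation as in (I). The full equality $\psi(X_C)=\sigma(X_C)$ does hold when $\Sigma_O$ is minimum (hence for optimum bases), since any other implication $X_{C'}\rto Y_{C'}$ with $X_{C'}\subseteq C$ has $C'\subsetneq C$, and criticality of $C$ forces $\phi(C')\subseteq C$; but for merely non-redundant bases the lemma as you stated it can fail, so it is safer to invoke only the containment you actually use.
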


Another parameter of the optimum basis was found in \cite[Theorem 20]{AN12}.

\begin{thm}\label{rs-bin}
Let $\Sigma_C$ be the canonical basis of a standard closure system $\<G, \phi\>$, and let $x_C\rto Y_C$ be any binary implication from $\Sigma_C$. Every optimum basis $\Sigma$ will contain an implication $x_C\rto B$, where $|B|=b_C=\min \{|Y|: \phi(Y)=\phi(\{x_C\})\setminus\{x_C\}\}$.
\end{thm}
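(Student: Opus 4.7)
The plan is to first recognize that since $x_C \rto Y_C$ is a binary implication in the canonical basis, the singleton $\{x_C\}$ is critical and has $k_{\{x_C\}} = 1$; hence by Theorem \ref{W}(II) the optimum basis $\Sigma$ contains some implication with singleton premise $\{x_C\}$, say $x_C \rto B$. I will also argue $\Sigma$ contains exactly one such implication: two implications $x_C \rto B_1$ and $x_C \rto B_2$ in $\Sigma$ could be merged into $x_C \rto B_1 \cup B_2$, strictly decreasing size. Finally note that $Y_C$ is closed, since the system is standard. It remains to show $|B| \le b_C$ and $|B| \ge b_C$.

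For the upper bound, pick $B_0 \subseteq Y_C$ with $\phi(B_0) = Y_C$ and $|B_0| = b_C$, and form $\Sigma' = (\Sigma \setminus \{x_C \rto B\}) \cup \{x_C \rto B_0\}$. Since $x_C \notin Y_C$ and $Y_C$ is closed, the $\Sigma$-derivation of $\phi(B_0)$ stays inside $Y_C$ and never triggers $x_C \rto B$; so $\Sigma$, and hence $\Sigma'$, derives all of $Y_C$ from $B_0$, and in particular $B$. The reverse direction is trivial. Thus $\Sigma'$ is equivalent to $\Sigma$, and $|B| > b_C$ would strictly reduce the size, contradicting optimality.

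For the lower bound I show $\phi(B) = Y_C$. Suppose for contradiction that $B^{*} := \phi(B) \subsetneq Y_C$. I will trace the $\Sigma$-closure of $\{x_C\}$: after firing $x_C \rto B$ and closing under implications whose premises lie inside $B$ (an implication $U \rto V$ with $U \subseteq B$ and $x_C \in V$ would force $\phi(U) \supseteq \phi(\{x_C\})$ while also $\phi(U) \subseteq Y_C$, contradiction), the current set is exactly $\{x_C\} \cup B^{*}$. Since $\phi(\{x_C\})$ strictly contains this, some $U \rto V \in \Sigma$ must fire next with $U \subseteq \{x_C\} \cup B^{*}$ and $V \not\subseteq \{x_C\} \cup B^{*}$. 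If $x_C \notin U$, then $V \subseteq \phi(B^{*}) = B^{*}$, contradiction; if $|U| = 1$, then $U = \{x_C\}$ and uniqueness forces $V = B \subseteq B^{*}$, again a contradiction. So $U = \{x_C\} \cup U'$ with $\emptyset \neq U' \subseteq B^{*}$ and $|U| \geq 2$.

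To conclude, replace $x_C \rto B$ and $U \rto V$ in $\Sigma$ by the single implication $x_C \rto B \cup V$, valid because $B \cup V \subseteq \phi(\{x_C\})$. The modified set $\Sigma''$ derives $\Sigma$ (firing $x_C \rto B \cup V$ produces $B$; and applying it to any superset of $U$ produces $V$), while $\Sigma$ derives $x_C \rto B \cup V$. The size change is at most $(1 + |B| + |V|) - (1 + |B|) - (|U'| + 1 + |V|) = -|U'| - 1 \le -2$, contradicting optimality. Hence $\phi(B) = Y_C$, so $|B| \ge b_C$. The main obstacle I expect is book-keeping for the equivalence of the modified bases, in particular verifying that no binary consequence is lost when $x_C \rto B$ is removed, which rests on the standard-system property and the closedness of $Y_C$.
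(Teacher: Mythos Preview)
The paper does not actually prove Theorem~\ref{rs-bin}: it is quoted as \cite[Theorem 20]{AN12} and used as a black box. So there is no in-paper argument to compare against.

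Your argument is correct and self-contained. A couple of small remarks on presentation. For the existence of an implication with premise $\{x_C\}$ it is cleanest to invoke Theorem~\ref{DG}(2) directly: any basis contains some $U\rto V$ with $\sigma(U)=\{x_C\}$, and since $U\subseteq\sigma(U)$ and $U\neq\emptyset$ one gets $U=\{x_C\}$. Your appeal to Theorem~\ref{W}(II) reaches the same conclusion but presupposes that existence. In the lower bound, the parenthetical about ``$U\subseteq B$ and $x_C\in V$'' is superfluous: once you observe that applying any $U\rto V$ with $x_C\notin U$ keeps the current set inside $\{x_C\}\cup B^{*}$ (because $V\subseteq\phi(U)\subseteq\phi(B^{*})=B^{*}$) and that the only singleton-premise implication with premise $\{x_C\}$ is $x_C\rto B$, the trichotomy you set up follows immediately. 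The replacement $\{x_C\rto B,\;U\rto V\}\mapsto\{x_C\rto B\cup V\}$ is sound: $\Sigma''$ derives $x_C\rto B$ trivially, and it derives $U\rto V$ because $x_C\in U$ lets you fire $x_C\rto B\cup V$ from $U$; conversely $\Sigma$ derives $x_C\rto B\cup V$ since $B\cup V\subseteq\phi(\{x_C\})$. The size drop is at least $|U|\ge 2$, contradicting optimality. Nothing is missing.
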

   
Closure systems with the \emph{unique critical sets}, or $UC$-systems, were introduced in \cite{AN12}: in such a system every essential element $X$ has exactly one critical $C\in \mathcal{C}$ with $\phi(C)=X$. 

The source of inspiration for $UC$-systems is its proper subclass of closure systems whose closure lattices satisfy the \emph{join-semidistributive law}: 
\[
(SD_\vee) \qquad
x\vee y=x\vee z \rto x\vee y= x\vee (y\wedge z).
\]

The join-semidistributive law plays an important role in lattice theory, for example in the study of free lattices, see \cite{FJN}.

It is proved in \cite[Proposition 49]{AN12} that every closure system whose closure lattice satisfies $(SD_\vee)$ is an $UC$-system. It is also well-known that $\op{Cl}(G,\phi)$ of every convex geometry $\<G,\phi\>$ is \jsd\!, see \cite{D88} and \cite{AGT03}.

Thus, convex geometries form a subclass of $UC$-systems. 
\vspace{0.3cm}

Another important subclass of $UC$-systems is represented by so-called systems \emph{without} $D$-\emph{cycles}.
The closure lattices of such systems are known in lattice literature as \emph{lower bounded}, and the lower bounded lattices form a proper subclass of \jsd\ lattices. 
Since we will need the notion of the $D$-relation and the $D$-basis in section \ref{Dgeom}, we give a quick definition of these in a standard closure systems, see more details and examples in \cite{ANR11}. 

The set of implications $\Sigma_D = \{A\rto b\}$ for the standard closure system $\langle G, \phi\rangle$ is called the $D$-\emph{basis}, if it is a basis with the following property: for every $(A\rto b) \in \Sigma_D^{nb}$, the replacement of any $a \in A$ by any $C\subseteq  \phi(\{a\})\setminus \{a\}$ gives an implication $(A\setminus \{a\})\cup C \rto b$ which does not hold in this closure system. In particular, when $C=\emptyset$, this means that all implications in the $D$-basis have non-redundant premises.

This allows to introduce the $D$-relation: $bDa$, for some $a,b\in G$, if $a \in A$ for some $(A\rto b)\in \Sigma_D^{nb}$. The $D$-cycle is the sequence $aDa_1D\dots a_kDa$. The closure system is without $D$-cycles, if there is no sequences of such type.

Results of \cite{AN12} establish a connection between this notion and the canonical basis $\Sigma_C$, which we now outline.

Every critical set $C\in \C$ is by the definition an $\geq_\phi$-order ideal. One can find a minimal, with respect to containment, order ideal $C'\subseteq C$ such that $\phi(C')=\phi(C)$. Subset $X_K=max_{\geq_\phi}(C')$ of $\geq_\phi$-maximal elements of $C'$ is called a \emph{ minimal order generator} for essential element $\phi(C)$. Such minimal order generator is unique, if  $\op{Cl}(G,\phi)$ is \jsd\!. 
 
Given canonical basis $\Sigma_C$ of $\<G,\phi\>$, one can replace $(C\rto Y_C)\in \Sigma_C^{nb}$ by $X_K \rto Y_C$, for any minimal order generator $X_K\subseteq C$, obtaining a new basis $\Sigma'$. Now form a binary relation $\Delta_{\Sigma'}$ on $G$ as follows: $(x,y)\in  \Delta_{\Sigma'}$ iff there exists $(X\rto Y)\in \Sigma_{\Sigma'}^{nb}$ such that $x \in X$ and $y \in Y$. By $\Delta_{\Sigma'}^{tr}$ one denotes a transitive closure of relation $\Delta_{\Sigma'}$. Note that only \emph{non-binary} implications participate in definition of $\Delta_{\Sigma'}$.

\begin{thm}\cite{AN12} 
A standard closure system $\<G,\phi\>$ is without $D$-cycles iff $\Delta_{\Sigma'}$ does not have cycles, i.e. $(x,x) \not \in \Delta_{\Sigma'}^{tr}$.
\end{thm}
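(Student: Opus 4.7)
The plan is to prove the equivalence by establishing a tight correspondence between the non-binary implications of the $D$-basis $\Sigma_D$ and those of $\Sigma'$. Both bases represent the same closure system, and both have premises consisting of $\geq_\phi$-antichains of ``maximal'' elements: in $\Sigma'$, because $X_K$ is by definition the set of $\geq_\phi$-maximal elements of a minimal order ideal $C'\subseteq C$; in $\Sigma_D$, because the $D$-basis condition forbids replacing any $a$ in a premise by elements strictly below it in $\geq_\phi$. Since the system is $UC$ (convex geometries or systems of interest here sit inside $UC$), the minimal order generator $X_K$ for each essential set $\phi(C)$ is unique.

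The key bridging lemma I would establish is the following. For every non-binary implication $(A\rto b)\in \Sigma_D$, letting $C=\sigma(A)$, one has $b\in \phi(C)\setminus C=Y_C$ and $A$ is ``order-equivalent'' to $X_K$ in the sense that each $a\in A$ satisfies $a\leq_\phi x$ for some $x\in X_K$, and conversely each $x\in X_K$ lies above some $a\in A$. Conversely, for every non-binary implication $(X_K\rto Y_C)\in \Sigma'$ and every $b\in Y_C$, there is a $D$-basis implication $A\rto b$ with $A\subseteq\downarrow_{\geq_\phi} X_K$. This is the content of the construction of $\Sigma_D$ from $\Sigma_C$ via ``refinement down the order'' together with splitting of conclusions, and it relies crucially on the uniqueness of $X_K$ in the $UC$ setting.

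Granted the correspondence lemma, I would translate cycles in both directions. Suppose $a\,D\,a_1\,D\,\cdots D\,a_k\,D\,a$ is a $D$-cycle coming from implications $(A_i\rto a_{i-1})\in \Sigma_D^{nb}$ with $a_i\in A_i$. By the lemma, each such $A_i$ corresponds to an implication $(X_K^i\rto Y_{C_i})\in \Sigma'^{nb}$ with $a_{i-1}\in Y_{C_i}$ and some $x_i\in X_K^i$ with $a_i\leq_\phi x_i$. This produces pairs $(x_i,a_{i-1})\in \Delta_{\Sigma'}$ which, together with the $\geq_\phi$-transitions $x_i\geq_\phi a_i$ used to return to the next starting element, would seem to need chaining through binary steps; however one can argue that $a_i$ and $x_i$ are interchangeable as far as $\Delta_{\Sigma'}^{tr}$ is concerned (each $a_i$ belongs to the next implication's premise domain via the same correspondence), and so a cycle in $\Delta_{\Sigma'}^{tr}$ is obtained. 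The converse direction is symmetric.

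The main obstacle is precisely the ``interleaving with $\geq_\phi$'' issue: a $D$-cycle lives in elements that may not all be $\geq_\phi$-maximal, whereas $\Delta_{\Sigma'}$ only sees elements of the $X_K$'s (which are $\geq_\phi$-maximal); one must check that lifting $a_i\in A_i$ to some $x_i\in X_K^i$ with $a_i\leq_\phi x_i$ can be done coherently around the whole cycle without ``shortcutting'' or losing the cycle, and this is where uniqueness of the minimal order generator in $UC$-systems does the real work.
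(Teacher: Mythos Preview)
The paper does not prove this theorem; it is merely quoted from \cite{AN12} without argument. So there is no ``paper's own proof'' to compare against, and your proposal must stand on its own.

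As it stands, it does not. Two concrete gaps:

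\textbf{(1) Unjustified reliance on $UC$.} The theorem is stated for an arbitrary standard closure system, yet your bridging lemma and your cycle-lifting both lean on uniqueness of the minimal order generator $X_K$. In a non-$UC$ system an essential element $\phi(C)$ can have several minimal order generators, and $\Sigma'$ fixes one particular choice for each $C$. Your claim that a $D$-basis premise $A$ is ``order-equivalent'' to \emph{the} $X_K$ therefore has no meaning in general: even if $A$ is a minimal order generator of $\phi(A)$, nothing guarantees that $A$ matches the $X_K$ actually used in $\Sigma'$. You also assert that $\sigma(A)$ is critical, but Theorem~\ref{W}(I) only gives quasi-closed; the step from quasi-closed to critical needs an argument (or a different route).

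\textbf{(2) The ``interchangeability'' step is the whole proof, and you skip it.} You correctly identify that a $D$-cycle lives among elements $a_i$ that need not be $\geq_\phi$-maximal, while $\Delta_{\Sigma'}$ only records pairs $(x,y)$ with $x\in X_K$ and $y\in Y_C$. Passing from $a_i$ to some $x_i\in X_K^{(i)}$ with $a_i\leq_\phi x_i$ gives you edges $(x_i,a_{i-1})\in\Delta_{\Sigma'}$, but to close a cycle in $\Delta_{\Sigma'}^{tr}$ you need the \emph{next} edge to start at $a_{i-1}$, not at $x_{i-1}$. The sentence ``one can argue that $a_i$ and $x_i$ are interchangeable as far as $\Delta_{\Sigma'}^{tr}$ is concerned'' is exactly the statement to be proved, and it is false on its face: binary $\geq_\phi$-steps are explicitly excluded from $\Delta_{\Sigma'}$, so there is no free passage between $a_i$ and $x_i$. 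What one actually needs is to show that whenever $a$ appears in a $D$-basis premise, then $a$ itself (not merely something above it) occurs as an element of some $X_K$ in $\Sigma'$---and that in turn requires relating minimal join covers of single elements to the chosen minimal order generators of essential elements. That is the substantive content of the theorem, and your sketch does not supply it.
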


In section \ref{Dgeom} we will also need a definition of a $K$-basis of a standard system.

\begin{df}\cite{AN12}
Set of implications $\Sigma_K$ is called a $K$-\emph{basis}, if it is obtained from canonical basis $\Sigma_C$ by replacing each implication $(C\rto Y_C)\in\Sigma_C$ by $X_K\rto Y_K$, where $X_K\subseteq C$ is a minimal order generator of $\phi(C)$, and $Y_K=\op{max}_{\geq_\phi} (Y_C)$.
\end{df}
In particular, by Theorem \ref{DG}, a $K$-basis is minimum and  $s(\Sigma_K)\leq s(\Sigma_C)$.
Note that if $C\rto Y_C$ is in $\Sigma^b_C$, i.e. $C=\{x\}$, for some $x \in G$, then $X_K=C=\{x\}$.

Further modifications of the $K$-basis exist for join-semidistributive closure systems and for closure systems without $D$-cycles. In the first case, the binary part is replaced by implications $x \rto Y$, where $Y$ is a unique minimal order generator of closed set $X_*=\phi(\{x\})\setminus \{x\}$. Such basis is called $F$-basis in \cite[Definition 52]{AN12}. 

For the second class, the $E$-basis is an optimization of the $K$-basis in the right sides of its non-binary implications. Namely, if $X\rto Y_1$ and $Z\rto Y_2$ are in the non-binary part of the  $K$-basis, where some $y \in Y_1\cap Y_2$ and $\phi(X)\subset \phi(Z)$, then $y$ will be excluded from $Y_2$ in corresponding implication of the $E$-basis. This modification is not possible in general closure systems, thus, the absence of the $D$-cycles is an essential pre-requisite. 

Further reduction of the right sides of non-binary part of the $E$-basis is possible, if elements in the right side of some implication are comparable by $\geq_\phi$ relation. Say, if $x\geq_\phi y$, then keeping $y$ in the right side is not necessary, since $x\rto y$ follows from the binary part of the basis. Thus, one can keep in the right sides only $\geq_\phi$-maximal elements. This further reduction of the $E$-basis is called in \cite{AN12} the \emph{optimized} $E$-basis, or $\Sigma_{OE}$.

Finally, since closure systems without $D$-cycles are join-semiditsributive, both modifications of the $K$-basis in its binary part (as in $F$-basis) and its non-binary part (as in optimized $E$-basis) gives basis $\Sigma_{FOE}$ for such systems.
\vspace{0.3cm}

\section{Convex geometries}\label{CG}

In this section we make the general observations about the bases of convex geometries.

\begin{lem}\cite{D88}
If $\G=\<G,\phi\>$ is a finite convex geometry, then $\op{Cl}(G,\phi)$ is join-semidistributive.
\end{lem}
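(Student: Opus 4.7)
The plan is to prove join-semidistributivity directly from the anti-exchange axiom via the extreme-point description of closed sets. For any closed set $U$, put $\op{ex}(U) = \{u \in U : u \notin \phi(U \setminus \{u\})\}$. The classical Edelman--Jamison fact (a consequence of the anti-exchange axiom, see \cite{EdJa}) is that $U = \phi(\op{ex}(U))$, and moreover that $\op{ex}(U) \subseteq A$ whenever $A \subseteq G$ satisfies $\phi(A) = U$. The latter containment is immediate: if $u \in \op{ex}(U) \setminus A$, then $A \subseteq U \setminus \{u\}$, hence $u \in \phi(A) \subseteq \phi(U \setminus \{u\})$, contradicting $u \in \op{ex}(U)$. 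I would take this extreme-point lemma as the main auxiliary ingredient.

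Granted it, the argument for $(SD_\vee)$ is short. Let $X, Y, Z$ be closed with $X \vee Y = X \vee Z = U$, that is, $\phi(X \cup Y) = \phi(X \cup Z) = U$. Since $X \cup Y$ and $X \cup Z$ are both generating sets of $U$, the extreme-point lemma yields
\[
\op{ex}(U) \subseteq (X \cup Y) \cap (X \cup Z) = X \cup (Y \cap Z).
\]
Because $Y$ and $Z$ are closed, so is $Y \cap Z$, and $Y \cap Z = Y \wedge Z$ in $\op{Cl}(G,\phi)$. Applying $\phi$ to the inclusion above and using $U = \phi(\op{ex}(U))$, we get
\[
U = \phi(\op{ex}(U)) \subseteq \phi(X \cup (Y \cap Z)) \subseteq \phi(U) = U,
\]
so $X \vee (Y \wedge Z) = U = X \vee Y$, which is exactly $(SD_\vee)$.

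The main obstacle in a self-contained treatment is establishing $U = \phi(\op{ex}(U))$; the ``$\op{ex}(U) \subseteq A$'' half is elementary as noted. The standard route is induction on $|U|$: for nonempty closed $U$, one invokes property (3) of the closed-set family listed in the preliminaries to find a closed $V \subsetneq U$ of the form $V = U \setminus \{u\}$, so that $u \in \op{ex}(U)$, and then applies the inductive hypothesis to $V$ after verifying $\op{ex}(V) \subseteq \op{ex}(U)$; the latter is where the anti-exchange axiom genuinely enters, ruling out the pathology of a would-be extreme point of $V$ being absorbed upon enlarging to $U$. Everything beyond this lemma is formal manipulation of closures, intersections and unions.
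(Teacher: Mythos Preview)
The paper does not prove this lemma; it simply cites \cite{D88} and later invokes the Edelman--Jamison extreme-point fact $U=\phi(\op{Ex}(U))$ from \cite{EdJa} as a black box. Your main argument---combining $U=\phi(\op{ex}(U))$ with the elementary inclusion $\op{ex}(U)\subseteq A$ for any $A$ with $\phi(A)=U$, and then intersecting the two generating sets $X\cup Y$ and $X\cup Z$---is correct and is the standard route to $(SD_\vee)$ from the anti-exchange axiom. Since you too take the extreme-point lemma as a cited input, your proof is complete and there is nothing in the paper to compare against.

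One caveat about your final paragraph: the inductive sketch you outline for $U=\phi(\op{ex}(U))$ is not quite right. The claimed inclusion $\op{ex}(V)\subseteq \op{ex}(U)$ for $V=U\setminus\{u\}$ with $u\in\op{ex}(U)$ is \emph{false} in general. Take $G=\{a,b,c\}$ with closed sets $\emptyset,\{a\},\{b\},\{c\},\{a,c\},\{b,c\},G$ (so $\phi(\{a,b\})=G$); this is a convex geometry. Then $\op{ex}(G)=\{a,b\}$, and for $u=a$, $V=\{b,c\}$ has $\op{ex}(V)=\{b,c\}\not\subseteq\{a,b\}$. Anti-exchange does not forbid an element (here $c$) from becoming extreme after an extreme point is removed. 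A correct induction instead shows $U=V\cup\{u\}\subseteq \phi(\op{ex}(V)\cup\{u\})$ and then argues that each $v\in\op{ex}(V)$ lies in $\phi(\op{ex}(U))$; alternatively, one proves directly that any minimal generating set of $U$ is contained in $\op{ex}(U)$, which is where anti-exchange actually bites. Since this was only an aside and you treat the Edelman--Jamison result as given, the lapse does not affect your proof of the lemma.
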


According to \cite[Proposition 41]{AN12}, every closure system with \jsd\ closure lattice has the unique $K$-basis.

Recall that the set of extreme points of a closed set $X\subseteq G$ is defined as $Ex(X)=\{x \in X: x \not \in \phi(X\setminus \{x\})\}$.  It is well-known that, in every convex geometry, for every closed set $X$, $X=\phi(Ex(X))$, see \cite{EdJa}. The equivalent statement in the framework of lattice theory is that the closure lattice of a finite convex geometry has unique irredundant join decompositions; see, for example, \cite[Theorem 1.7]{AGT03}. The closure lattices of finite convex geometries are known in the literature as \emph{locally distributive}, or \emph{meet distributive}. Such lattices $L$ are characterized by the property that, for every element $x \in L$, if $y=\bigwedge \{x' \in L: x' \prec x\}$, then the interval $[y,x]$ is Boolean.

The following statement was observed in \cite{W94}, Corollary 13(b). Recall from Theorem \ref{W} (II) that every optimum basis of any closure system has an implication $X_C\rto Y_C$, corresponding to a critical set $C$, with $|X_C|=k_C$. 

\begin{thm}\label{conGopt}  If\/ $\G=\< G, \phi\>$ is a convex geometry, then the $K$-basis is left-optimum, and for every critical set $C$, the corresponding implication $X_C\rto Y_C$ in the $K$-basis satisfies $X_C=Ex(\phi(C))$.  

\end{thm}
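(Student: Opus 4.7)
The plan is to first identify the premise $X_K(C)$ of the $K$-basis implication for each critical set $C$ with $Ex(\phi(C))$, and then derive left-optimality from Theorem \ref{W}(II) by way of the resulting equality $|X_K(C)| = k_C$.

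I would begin by verifying that $Ex(\phi(C))$ is a $\geq_\phi$-antichain contained in $C$. For the containment, I would use the unique irredundant generation property of convex geometries in the following form: any $A$ with $\phi(A) = \phi(C)$ necessarily has $Ex(\phi(C)) \subseteq A$, since missing any extreme point $x$ would place $A$ inside $\phi(C) \setminus \{x\}$, so $\phi(A) \subseteq \phi(\phi(C) \setminus \{x\})$, which omits $x$ by extremality; taking $A = C$ gives $Ex(\phi(C)) \subseteq C$. The antichain property is direct: distinct $x, y \in Ex(\phi(C))$ with $y \in \phi(\{x\})$ would force $y \in \phi(\phi(C) \setminus \{y\})$, contradicting extremality of $y$.

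Next I would set $C' := \bigcup_{x \in Ex(\phi(C))} \phi(\{x\})$, the $\geq_\phi$-down-set generated by $Ex(\phi(C))$. Since $C$ is itself a $\geq_\phi$-order ideal, $C' \subseteq C$; moreover $\phi(C') = \phi(C)$, and the antichain property gives $\max_{\geq_\phi}(C') = Ex(\phi(C))$. To see $C'$ is the minimal order ideal in $C$ of closure $\phi(C)$, take any order ideal $C'' \subseteq C$ with $\phi(C'') = \phi(C)$ and put $X'' = \max_{\geq_\phi}(C'')$; then $\phi(X'') = \phi(C'') = \phi(C)$, and the preceding argument forces $Ex(\phi(C)) \subseteq X'' \subseteq C''$; as $C''$ is a down-set, $C' \subseteq C''$. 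Uniqueness of the minimal order generator in a \jsd closure lattice (which holds for convex geometries) then yields $X_K(C) = Ex(\phi(C))$.

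Left-optimality then follows routinely. From $Ex(\phi(C)) \subseteq C$ and the generation bound $|Ex(\phi(C))| \leq |U|$ for every $U \subseteq C$ with $\phi(U) = \phi(C)$, we get $k_C = |Ex(\phi(C))| = |X_K(C)|$, so $s_L(\Sigma_K) = \sum_{C \in \mathcal{C}} k_C$. On the other hand, for any basis $\Sigma$ of $\G$, Theorem \ref{DG}(2) provides, for each $C \in \mathcal{C}$, a distinct implication $(U \rto V) \in \Sigma$ with $\sigma(U) = C$; the alternative formula $k_C = \min\{|U| : U \subseteq C, \sigma(U) = C\}$ in Theorem \ref{W}(II) gives $|U| \geq k_C$, and summing yields $s_L(\Sigma) \geq \sum_{C} k_C = s_L(\Sigma_K)$. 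The main technical step is the identification $X_K(C) = Ex(\phi(C))$, which hinges on the interplay between unique irredundant generation in convex geometries and the fact that critical sets are order ideals; the left-optimality conclusion is then a short bookkeeping argument.
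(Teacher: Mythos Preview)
Your proof is correct and follows the same essential approach as the paper's: identify $X_K(C)$ with $Ex(\phi(C))$ via the unique-irredundant-generator property of convex geometries, and then read off left-optimality from the resulting equality $|X_K(C)|=k_C$. The paper's proof is considerably terser---it simply asserts that $Ex(\phi(C))$ is the $K$-basis premise and that, being the unique irredundant generator, it must occur as a premise in every optimum basis---whereas you spell out why $Ex(\phi(C))$ is the minimal order generator inside $C$ and carry out the bookkeeping via Theorems~\ref{DG}(2) and~\ref{W}(II) explicitly; but the underlying argument is the same.
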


\begin{proof}
If $X=\phi(C)$ is an essential (closed) element of the closure system, $Ex(X)=Ex(C)$ is the premise of implication in the $K$-basis, corresponding to $X$. Since $Ex(X)$ is the unique irredundant generator for $X$, it should also appear as a premise in every optimum basis for $\G$. 
\end{proof}

Recall that the closure system $\G=\< G, \phi\>$ is called \emph{atomistic}, if $\phi(\{x\})=\{x\}$, for every $x \in G$.
\begin{cor}\label{left-opt}

Every $K$-basis of an atomistic \jsd\ closure system is left-side optimum.

\end{cor}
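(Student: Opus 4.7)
The plan is to exploit two structural features of atomistic join-semidistributive closure systems: the partial order $\geq_\phi$ on $G$ is discrete, and by \cite[Proposition~41]{AN12} the $K$-basis is unique. Together these should pin down the $K$-basis premise, for each critical set $C$, as the unique minimum-cardinality generator of $\phi(C)$ inside $C$, after which a counting argument against an arbitrary basis finishes the job.

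First I would unwind the definition of $\Sigma_K$ in the atomistic setting. Because $\phi(\{x\}) = \{x\}$ for every $x \in G$, the relation $\geq_\phi$ is trivial, so every subset of $G$ is simultaneously a $\geq_\phi$-order ideal and a $\geq_\phi$-antichain. Hence, for a critical set $C$, the $K$-basis premise $X_K = \op{max}_{\geq_\phi}(C')$ collapses to $X_K = C'$, where $C' \subseteq C$ is simply a minimal-by-containment subset with $\phi(C') = \phi(C)$; join-semidistributivity then makes this $X_K$ unique.

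Next I would verify that $|X_K| = k_C$, where $k_C = \min\{|U|:U \subseteq C,\ \phi(U) = \phi(C)\}$ is the invariant from Theorem~\ref{W}(II). Any $U \subseteq C$ attaining this minimum is automatically irredundant (otherwise a smaller generator would exist), hence minimal by containment; by uniqueness of $X_K$ we get $U = X_K$, so $|X_K| = k_C$. This is the step I expect to be the main obstacle: it is the one place where both hypotheses enter essentially --- atomicity to turn ``minimal order ideal'' into ``minimal subset,'' and join-semidistributivity to supply the uniqueness of $X_K$.

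To conclude, I would compare sizes: $s_L(\Sigma_K) = \sum_{C \in \C} k_C$. For any basis $\Sigma^*$, Theorem~\ref{DG}(2) furnishes, for each critical $C$, an implication $(U \rto V) \in \Sigma^*$ with $\sigma(U) = C$; since $U \subseteq \sigma(U) = C$, the definition of $k_C$ forces $|U| \geq k_C$. Distinct critical sets yield distinct such implications (as $\sigma(U)$ determines $C$), so summing gives $s_L(\Sigma^*) \geq \sum_{C \in \C} k_C = s_L(\Sigma_K)$, which is exactly the left-side optimality claim.
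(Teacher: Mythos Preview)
Your argument is correct, but it proceeds along a different route than the paper. The paper's proof is a two-line reduction: it invokes Corollary~1.10 of \cite{AGT03}, which says that every atomistic \jsd\ closure system \emph{is} a convex geometry, and then simply applies Theorem~\ref{conGopt} (which already shows the $K$-basis of any convex geometry is left-side optimum, with $X_K = Ex(\phi(C))$). You instead bypass the convex-geometry characterization entirely and argue directly: atomicity trivializes $\geq_\phi$, so the $K$-premise $X_K$ is just a containment-minimal generator of $\phi(C)$ inside $C$; join-semidistributivity makes it unique; hence $|X_K| = k_C$, and then Theorem~\ref{DG}(2) together with the second description of $k_C$ in Theorem~\ref{W}(II) gives the counting bound against an arbitrary basis. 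The paper's route is shorter and situates the corollary as an instance of the convex-geometry result (and in particular identifies $X_K$ concretely as the set of extreme points), while your route is more self-contained---it does not rely on the structural theorem from \cite{AGT03} or on any facts about extreme points, only on the general machinery of Theorems~\ref{DG} and~\ref{W}. One small point worth making explicit in your write-up: from $\sigma(U)=C$ you get $C\subseteq \phi(U)$ (since closed sets are $\sigma$-closed), hence $\phi(U)=\phi(C)$, which is what actually licenses the inequality $|U|\geq k_C$.
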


Indeed, this follows from Theorem \ref{conGopt} and Corollary 1.10 in \cite{AGT03}, that states that every atomistic \jsd\ closure system is a convex geometry.

We also observe that the binary part of any optimum basis of any convex geometry is uniquely defined.
Recall that basis $\Sigma$ of any standard closure system was called \emph{regular} in \cite{AN12}, if for every $(x\rto B)\in \Sigma^b$, it holds $\phi(B)=\phi(\{x\})\setminus\{x\}$. It was shown in \cite[Corollary 17]{AN12} that every optimum basis of a standard closure system is regular.

\begin{lem}\label{RSconvgeo}

If $\Sigma$ is a (regular right-side) optimum basis of a convex geometry, then, for every $(x \rto Y)\in \Sigma$, $Y=Ex (\phi(\{x\})\setminus \{x\})$.

\end{lem}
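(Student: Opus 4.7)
The plan is to combine Theorem \ref{rs-bin} with the characterization of extreme points in a convex geometry as the unique minimum generating set of any closed set. Write $X = \phi(\{x\})\setminus\{x\}$, which is closed because the system is standard. Since $\Sigma$ is regular, the implication $x\rto Y$ satisfies $\phi(Y)=X$, and since $\Sigma$ is optimum, Theorem \ref{rs-bin} forces $|Y|=b_{\{x\}}=\min\{|Z|:\phi(Z)=X\}$.

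The key step is to establish that in a convex geometry every generating set of a closed set $X$ contains $Ex(X)$. Suppose $\phi(Z)=X$ and let $e\in Ex(X)$. If $e\notin Z$, then $Z\subseteq X\setminus\{e\}$, hence $e\in X=\phi(Z)\subseteq\phi(X\setminus\{e\})$, contradicting the definition of an extreme point. Therefore $Ex(X)\subseteq Z$. Combined with the fact (cited in the excerpt) that $X=\phi(Ex(X))$, this shows that $Ex(X)$ is the unique minimum-cardinality generator of $X$, so $b_{\{x\}}=|Ex(X)|$.

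Putting these together, $Y$ is a generator of $X$ with $|Y|=b_{\{x\}}=|Ex(X)|$ and $Ex(X)\subseteq Y$, so $Y=Ex(X)=Ex(\phi(\{x\})\setminus\{x\})$, as desired.

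The only mildly subtle point is the inclusion $Ex(X)\subseteq Z$ for every generator $Z$ of $X$; this is standard in the convex geometry literature (it is the combinatorial shadow of unique irredundant join decomposition in locally distributive lattices) and is essentially what makes the extreme-point set the unique \emph{minimum} generator, not just one of possibly several irredundant ones. Once that inclusion is in place, the lemma follows directly from Theorem \ref{rs-bin} without any further computation.
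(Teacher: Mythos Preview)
Your proof is correct and follows essentially the same route as the paper's own argument: both show that $Y$ is a minimum-cardinality generator of $X_*=\phi(\{x\})\setminus\{x\}$ and that any such generator must contain $Ex(X_*)$, whence $Y=Ex(X_*)$. The only cosmetic difference is that the paper cites Theorem~16 and Corollary~18 of \cite{AN12} for these two facts, while you invoke Theorem~\ref{rs-bin} together with regularity for the first and supply a short direct argument for the inclusion $Ex(X_*)\subseteq Y$; the latter makes your version slightly more self-contained but is the same idea.
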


\begin{proof}

According to Theorem 16 in \cite{AN12}, for every $x \rto Y$ in $\Sigma$, $Y$ is the set of minimal cardinality with the property 
$\phi(Y)=X_*=\phi(\{x\})\setminus \{x\}$. Moreover, according to Corollary 18 in \cite{AN12}, $Ex(X_*) \subseteq Y$. Hence, $Y=Ex(X_*)$, and such conclusion in any optimum basis is unique.
\end{proof}

We note that in terminology of \cite{AN12}, set $Y=Ex(X_*)$ in the proof of preceding Lemma is simultaneously the \emph{minimal order generator} for closed set $X_*$, and such generators are unique in closure systems with \jsd\ closure lattices. The basis $\Sigma$ of any \jsd\ system, whose binary part comprises $x\rto Y$, where $Y$ is a unique order generator of closed set $X_*= \phi(\{x\})\setminus \{x\}$ is called $F$-basis in \cite[Definition 52]{AN12}.

The non-binary part of the $F$-basis is the same as in $K$-basis. The $F$-basis has the further refinement in the systems without $D$-cycles, and we will return to it in section \ref{Dgeom}. 

\section{Convex geometries with the Carousel property}

An important example of a (finite) convex geometry is $\CoX$, where $G$ is a (finite) set of points in $R^n$, and $\CoX$ stands for the geometry of convex sets relative to $G$. In other words, the base set of such closure system is $G$, and closed sets are subsets $X$ of $G$ with the property that whenever point $x \in G$ is in the convex hull of some points from $X$, then $x$ must be in $X$ (see more details of the definition, for example, in \cite{AGT03}).
We will call convex geometries of the form $\CoX$ \emph{affine}.

The following definition is a slight modification of the property introduced in \cite{A08}.

\begin{df}\label{nCar}

\item A closure system $\G=\<G,\phi\>$ satisfies the $n$-Carousel property, if for every $X \subseteq G$, that has at least two elements, and $x,y \in \phi(X)$, there exists $X'\subset X$ such that $|X'|\leq \min\{n,|X|-1\}$ and  $x \in \phi(\{y\} \cup X')$.

\end{df}

The $2$-Carousel property was an essential tool in dealing with representation problem for affine convex geometries in K.~Adaricheva and M.~Wild \cite{AW10}.

If a closure system $\mathcal{G}=\la G,\phi\ra$ satisfies the $n$-Carousel property, then, assuming that $y$ may be taken in $X$, we see that the closures in $\mathcal{G}$ are fully defined by the closures of at most $(n+1)$-element subsets of $X$. In particular, $\mathcal{G}$ also satisfies the $n$-\emph{Carath\'eodory property}: 

if $x \in \phi(Y)$, $Y \subseteq X$, then $x \in \phi(x_0,\dots,x_n)$ for some $x_0,\dots,x_n \in Y$.

The following statement follows from \cite[Lemma 2.3]{A08}.

\begin{lem}\label{good} Every convex geometry $\CoX$, where $G$ is a finite set of points in $R^n$, satisfies the $n$-Carousel property.

\end{lem}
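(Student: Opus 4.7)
The plan is to turn the $n$-Carousel property into a geometric statement about the polytope $C=\mathrm{conv}(X)\subseteq R^n$ and extract $X'$ from the exit face of $C$ via Carath\'eodory's theorem applied inside that face. Throughout, I would use the standard fact that in $\CoX$ the closure $\phi(S)$ equals $\mathrm{conv}(S)\cap G$, so that $x\in\phi(\{y\}\cup X')$ is the same as $x\in\mathrm{conv}(\{y\}\cup X')$.

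Fix $X\subseteq G$ with $|X|\geq 2$ and $x,y\in\phi(X)\subseteq G$. First, I would dispose of the trivial case $x=y$ by taking $X'=\emptyset$, which is a proper subset of $X$ with $|X'|=0\leq\min\{n,|X|-1\}$ and for which $x\in\phi(\{y\})$ is automatic. Otherwise, starting at $y$ and travelling along the ray through $x$, let $z$ be the last point of this ray that still lies in $C$ (well-defined because $C$ is compact). Then $z\in\partial C$, so $z$ belongs to some proper face $F$ of $C$, and $x\in[y,z]\subseteq\mathrm{conv}(\{y,z\})$.

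The heart of the argument is the dimension bound. Because $C=\mathrm{conv}(X)$ sits in $R^n$ and is affinely spanned by the $|X|$ points of $X$, one has $\dim C\leq\min\{n,|X|-1\}$, hence $\dim F\leq \dim C - 1\leq\min\{n,|X|-1\}-1$. Carath\'eodory applied inside $F$ then expresses $z$ as a convex combination of at most $\dim F+1$ vertices of $F$; since vertices of $F$ are vertices of $C$ and vertices of $C$ lie in $X$, this yields $X'\subseteq X$ with $|X'|\leq\min\{n,|X|-1\}<|X|$, so $X'\subsetneq X$. Combining, $x\in\mathrm{conv}(\{y,z\})\subseteq\mathrm{conv}(\{y\}\cup X')=\phi(\{y\}\cup X')$, finishing the verification. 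The one subtle point I expect is the small-$|X|$ regime (e.g.\ $|X|=2$, where every proper face of $C$ is a single vertex): only the combined bound $\dim F\leq\min\{n,|X|-1\}-1$, rather than the ambient bound $\dim F\leq n-1$ alone, delivers the required cap $|X'|\leq\min\{n,|X|-1\}$.
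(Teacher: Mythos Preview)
Your argument is correct: the ray-shooting from $y$ through $x$ to the exit point $z$ on a proper face $F$ of $\mathrm{conv}(X)$, followed by Carath\'eodory inside $F$, is exactly the standard route, and your combined bound $\dim F\le\min\{n,|X|-1\}-1$ cleanly handles both the ambient-dimension cap and the small-$|X|$ cap. The paper itself gives no proof here; it simply defers to \cite[Lemma 2.3]{A08}, whose content is precisely this geometric argument, so your write-up is in fact more self-contained than what the paper provides.
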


\begin{thm}\label{opt-nCar}

If $\G=\<G,\phi\>$ is any convex geometry satisfying the $n$-Carousel property, then one can obtain an optimum basis in time $O(|\Sigma_C|^2)$.

\end{thm}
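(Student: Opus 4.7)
The plan is to combine the structural results already established with a greedy local reduction enabled by the $n$-Carousel property. By Theorem \ref{conGopt}, the premise of the implication corresponding to each critical set $C$ in any optimum basis must be $X_C = Ex(\phi(C))$, and by Lemma \ref{RSconvgeo} the conclusion of each binary implication $x\rto Y$ must be $Ex(\phi(\{x\})\setminus \{x\})$. Computing these extreme-point sets from $\Sigma_C$ reduces to a handful of closure evaluations per essential element, and so fits within the stated time bound. This produces the $F$-basis of \cite{AN12}, which is left-optimum and has an optimum binary part; only the conclusions of the non-binary implications remain to be optimized.

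For a non-binary implication $X_C\rto Y_K$ in the $F$-basis, with $Y_K=\op{max}_{\geq_\phi}(\phi(C)\setminus C)$, I would shrink $Y_K$ greedily as follows. Process the elements of $\phi(C)\setminus C$ in descending $\geq_\phi$-order, maintaining a partial conclusion $Y^\ast$; for each candidate $z$, test whether $z$ is already derivable from $X_C$ together with the rest of the basis after replacing $X_C\rto Y_K$ by $X_C\rto Y^\ast$, and add $z$ to $Y^\ast$ only if the test fails. The Carath\'eodory consequence of the $n$-Carousel property, noted right after Definition \ref{nCar}, is exactly what localises this derivability test: $z$ is recoverable iff $z\in\phi(\{x_0,\ldots,x_n\})$ for some tuple drawn from the elements already in force, so only subsets of bounded size need be examined rather than an iterated forward-chaining closure.

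Because $n$ is a fixed parameter of the geometry, each derivability test costs a polynomial of constant degree, and aggregating over the non-binary implications and their candidate right-hand sides yields the claimed $O(|\Sigma_C|^2)$ bound. The main obstacle is to verify that the locally-greedy procedure produces an \emph{optimum}, not merely irredundant, conclusion. Here one can lean on two rigid features: by Theorem \ref{W}(II) the family of premises and their cardinalities is fixed across all optimum bases, and the uniqueness of the minimal order generator in \jsd\ systems (inherent to the $K$- and $F$-basis constructions) pins the conclusion down up to exchanges along the $\geq_\phi$-order. A short exchange argument — swapping a hypothetical smaller conclusion for the greedy output one Carousel step at a time — should then show that no alternative basis can beat the greedy size.
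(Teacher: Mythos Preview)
Your proposal has a genuine gap: you never establish the central fact that drives the paper's argument, namely that under the $n$-Carousel property the conclusion of \emph{every} non-binary implication can be reduced to a \emph{single} element. The paper does not shrink $Y_K$ greedily and then argue minimality; instead it replaces $Ex(C)\rto\phi(C)$ by $Ex(C)\rto b$ for an \emph{arbitrary} $b\in\phi(C)\setminus Ex(C)$, and the substantive work is to prove that the resulting family $\Sigma$ is still a basis. The $n$-Carousel property enters exactly there: given any $b'\in\phi(C)$ and the single retained element $b$, one writes $b'\in\phi(A\cup\{b\})$ with $A\subsetneq Ex(C)$, so $\phi(A\cup\{b\})\subsetneq\phi(C)$, and an induction on height in the closure lattice recovers $b'$. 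Once this is shown, optimality of the non-binary right side is trivial (you cannot do better than size $1$), and no exchange argument is needed.

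By contrast, your plan leaves optimality as ``the main obstacle'' and offers only a sketch (``a short exchange argument \ldots\ should then show''). That is not a proof. Moreover, your localisation of the derivability test via Carath\'eodory is not the right tool: the question is not whether $z\in\phi(\{x_0,\dots,x_n\})$ for some small tuple, but whether $z$ is derivable from $X_C$ using the \emph{modified} basis (with the current implication's conclusion already truncated). Carath\'eodory bounds the closure of a set, not forward-chaining under a changed rule set; the two coincide only after you have shown the modified rules still generate $\phi$, which is precisely what you are trying to establish. Finally, your complexity claim hides a dependence on $n$ (``subsets of bounded size'') that the stated $O(|\Sigma_C|^2)$ bound does not accommodate; the paper's construction avoids this because it performs no search over $n$-tuples at all.
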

\begin{proof}
Let $\Sigma_C=\{C\rto \phi(C)\setminus C: C \in \mathcal{C}\}$ be the canonical basis of $\mathcal{G}$. We know from the proof of Theorem \ref{conGopt} that the set of implications $\Sigma_{ex}=\{Ex(C)\rto \phi(C)\setminus C: C \in \mathcal{C}\}$ is also a basis of $\mathcal{G}$.

We now write a new set of implications $\Sigma$:
\begin{itemize}
\item for each non-binary implication $Ex(C)\rto \phi(C)\setminus C$ in $\Sigma_{ex}$, pick any $b \in \phi(C)\setminus Ex(C)$, and replace this implication by $Ex(C)\rto b$;
\item replace each binary implication $a \rto B$ in $\Sigma_{ex}$ by $a \rto Ex(B)$.
\end{itemize}

We need to show that $\Sigma$ is also the basis for $\mathcal{G}$. For this, we associate with $\Sigma$ closure operator $\tau$ and show that every set $Y\subseteq G$ is $\phi$-closed iff it is $\tau$-closed.

Note that $\Sigma$ only reduces the conclusions in implications of $\Sigma_{ex}$. Hence, $\tau(Y)\subseteq \phi(Y)$, for every $Y\subseteq G$. In particular, every $\phi$-closed set is $\tau$-closed.
Also, since $\la G, \phi\ra$ is standard, $\la G, \tau\ra$ must be standard as well. For this, we observe that  $\tau(\{a\})\setminus \{a\} = \tau(\{a\}) \cap  (\phi(\{a\})\setminus \{a\})$ must be $\tau$-closed, since $\phi(\{a\})\setminus \{a\}$ is $\phi$-closed and every $\phi$-closed set is $\tau$-closed.

So now we consider any $\tau$-closed set $Z$, and argue by induction on the height of $Z$ in the closure lattice $\Cl(X,\tau)$. 

The least $\tau$-closed set is $\emptyset$, which is also $\phi$-closed.

Now assume that $Z$ is some $\tau$-closed set, and it has already been shown that every $\tau$-closed $Z' \subset Z$ is also $\phi$-closed. In what proceeds, we will show that $Z$ is also $\phi$-closed. First, it is done in case when $Z$ is join irreducible in $\Cl(X,\tau)$. Then we turn to case when $Z$ is not join irreducible, which in turn splits into two cases: when $\phi(Z)$ is essential element in $\Cl(X,\phi)$ and when it is not.

\vspace{0.5cm}
\begin{claim}
If $Y=\tau(\{a\})\subseteq Z$, then $Y=\phi(\{a\})$.
\end{claim}

\begin{proof}
Since $Y_*=Y\setminus \{a\}$ is $\tau$-closed, it is also $\phi$-closed, by inductive assumption. If $(a\rto B)\in \Sigma_{ex}$, then $B=\phi(\{a\})\setminus \{a\}$, and, due to $\tau(\{a\})\subseteq \phi(\{a\})$ we have $Y_*\subseteq B$. On the other hand, $Ex(B)\subseteq Y_*$ due to implication $a\rto Ex(B)$ in $\Sigma$, hence, $B=\phi(Ex(B))\subseteq Y_*$. Therefore, $B=Y_*$ and $\phi(\{a\})=B\cup \{a\}=Y_*\cup \{a\}=Y$ is $\phi$-closed. 
\end{proof}

If $Z$ is a join-irreducible in $\Cld(X,\tau)$, then $Z=\tau(\{a\})$, for some $a \in X$.  Applying Claim 1, we obtain that $Z$ is $\phi$-closed.

Now assume that $Z$ is join reducible in $\Cld(X,\tau)$. First we want to show that $\phi(Z)$ is join reducible in $\Cld(X,\phi)$.

Suppose $Z_1=\phi(Z)$ is join irreducible in $\Cld(X,\phi)$. Then $Z_1=\phi(\{a\})$, for some $a \in X$. If $a \not \in Z$, then $\phi(\{a\})\setminus \{a\}$ is not $\phi$-closed: we would have $Z\subseteq \phi(\{a\})\setminus \{a\}$, but $\phi(Z)\not \subseteq \phi(\{a\})\setminus \{a\}$. This contradicts to the fact that $\la X, \phi\ra$ is a standard closure system. Hence, $a \in Z$. 

Consider $\tau(\{a\})\subseteq Z$.  Applying Claim 1, conclude that $\tau(\{a\})=\phi(\{a\})=Z$. This will contradict the assumption that $Z$ is join reducible in $\Cld(X,\tau)$.

Thus, $Z_1=\phi(Z)$ must be join reducible.

(1) First, consider the case when $Z_1$ is essential element in $\mathcal{G}$. Then there exists $(C\rto \phi(C))\in \Sigma_C$ such that $Z_1=\phi(C)$, $|C|>1$, hence, $(Ex(C)\rto \phi(C)) \in \Sigma_{ex}$, $|Ex(C)|>1$. Apparently, $Ex(C)\subseteq Z$. This implies $b\in Z$, where $(Ex(C)\rto b)\in \Sigma$.   


Now we want to apply the $n$-Carousel property to show that every $b' \in \phi(C)$ belongs to $Z$. We have $b',b \in \phi(Ex(C))$, then $b' \in \phi(A\cup \{b\})$, for some $A\subset Ex(C)$. In particular, $A$ misses an extreme element of $Z_1$, hence, $\phi(A\cup \{b\}) \subset Z_1$. 

We have $\tau(A\cup \{b\})\subset Z$,  otherwise $\phi(A\cup \{b\})=\phi(Z)=Z_1$, a contradiction. According to the inductive assumption, $\tau(A\cup \{b\})$ is also $\phi$-closed. This implies $b' \in \tau(A\cup \{b\})\subseteq Z$, as desired.

(2) Secondly, consider the case when $Z_1$ is not essential in $\mathcal{G}$. Take $A=Ex(Z_1)$, noting that $A\subseteq Z$. The implication $A\rto Z_1\setminus A$ follows from the basis $\Sigma_{ex}$. In particular, for every $z \in Z_1\setminus A$, there is a sequence $\sigma_1,\dots, \sigma_t$ of implications from $\Sigma_{ex}$, with $\sigma_k=(A_k\rto B_k)$, such that $A_1\subseteq A$, $z \in B_t$ and $A_k \subseteq A\cup B_1\cup\dots \cup B_{k-1}$, $t\geq k>1$.

If $\tau(A_1)=Z$, then $\phi(A_1)=\phi(Z)=Z_1$, which contradicts to $Z_1$ being not essential. Hence, $\tau(A_1)\subset Z$, and according to inductive assumption, $\tau(A_1)=\phi(A_1)$, so that $B_1\subseteq Z$. This implies that $A_2\subseteq Z$, and by a similar argument, we conclude that $B_2\subseteq Z$. Proceeding along the sequence $\sigma_1,\dots, \sigma_t$, we obtain eventually, that $z \in Z$. Hence, $Z_1\subseteq Z$, and $Z$ is $\phi$-closed.

This finishes the proof that $\Sigma$ is a basis for $\mathcal{G}$. It follows that $\Sigma$ is an optimum basis. Indeed, it is left-side optimum due to Theorem \ref{conGopt}. For the right sides, it cannot be made shorter for non-binary implications. For the binary implications, the right-side optimality follows from Lemma \ref{RSconvgeo}.

\end{proof}

\begin{cor}\label{one} For every optimum basis $\Sigma_O$ of an affine convex geometry, for every $(A\rto B)\in \Sigma_O^{nb}$, $|B|=1$.
\end{cor}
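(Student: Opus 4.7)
The plan is to combine Lemma \ref{good}, which says every affine convex geometry $\CoX$ satisfies the $n$-Carousel property, with Theorem \ref{opt-nCar}: the latter's proof explicitly constructs an optimum basis $\Sigma$ in which every non-binary implication has the form $Ex(C)\rto b$ with a single element on the right. Thus \emph{some} optimum basis has the desired property for free; what remains is to upgrade this to \emph{every} optimum basis.

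To do that, I would run a size-counting argument. Let $\Sigma_O$ be any optimum basis. By Theorem \ref{DG}(2), for each critical set $C$ we can pick a representative $(U_C\rto V_C)\in\Sigma_O$ with $\sigma(U_C)=C$, and these representatives are distinct because $\sigma(U)$ is a function of $U$. For a binary critical $C=\{x\}$, the inclusion $U_C\subseteq\sigma(U_C)=\{x\}$ forces $U_C=\{x\}$; combined with Theorem \ref{rs-bin} and the observation that an optimum basis contains at most one implication per premise (two implications $\{x\}\rto B_1$ and $\{x\}\rto B_2$ can be amalgamated into $\{x\}\rto B_1\cup B_2$, strictly reducing total size while defining the same closure operator), one concludes $|V_C|=b_C$ for each binary representative. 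For a non-binary critical, $|U_C|\geq k_C$ by Theorem \ref{W}(II) and $|V_C|\geq 1$ since conclusions are non-empty. Summing all these lower bounds yields
\[
s(\Sigma_O)\;\geq\;\sum_{C\text{ binary}}(1+b_C)\;+\;\sum_{C\text{ non-binary}}(k_C+1),
\]
and the right-hand side is exactly the size of the basis $\Sigma$ produced in the proof of Theorem \ref{opt-nCar}.

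Since $\Sigma_O$ is optimum and $s(\Sigma_O)=s(\Sigma)$, equality must hold in every term, so every non-binary representative satisfies $|V_C|=1$, and $\Sigma_O$ contains no additional implications beyond the chosen representatives (in particular, implications with saturation quasi-closed but not critical, permitted a priori by Theorem \ref{W}(I), are ruled out). Because in any convex geometry each singleton $\{x\}$ is quasi-closed, so $\sigma(\{x\})=\{x\}$, every non-binary implication ($|A|\geq 2$) in $\Sigma_O$ must be a representative of a non-binary critical, hence has $|B|=1$. The only delicate ingredient is the uniqueness of the premise in an optimum basis, which is handled by the amalgamation observation; everything else is a direct invocation of results already in the excerpt.
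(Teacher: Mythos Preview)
Your proof is correct and follows essentially the same approach as the paper: both arguments account for the total size $s(\Sigma_O)$ by splitting it into the four components $L_O^{b}$, $R_O^{b}$, $L_O^{nb}$, $R_O^{nb}$, pin down the first three via Theorems~\ref{W}(II) and~\ref{rs-bin}, and then compare with the explicit optimum basis from Theorem~\ref{opt-nCar} to force each non-binary conclusion to have cardinality~$1$. The paper phrases this as ``three of the four components are invariants, hence so is the fourth, now evaluate it once,'' while you phrase it as a term-by-term lower bound that must be tight everywhere; these are two sides of the same coin.

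One minor difference worth noting: the paper's version silently uses that any optimum basis is minimum (so that the number of binary and non-binary implications is determined), whereas your counting argument actually derives this as a byproduct --- the tightness of your inequality forces $\Sigma_O$ to consist of exactly one representative per critical set, with no extras. In that sense your write-up is slightly more self-contained. The amalgamation observation you use to pin down $|V_C|=b_C$ in the binary case is correct and does the job; alternatively you could have invoked regularity of optimum bases (\cite[Corollary~17]{AN12}) to get $|V_C|\geq b_C$ directly and let the equality-forcing step finish it.
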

\begin{proof}  First, we point that $R_O^{nb}=|B_1|+\dots +|B_k|$ is  a fixed parameter for any given closure system, where $B_i$, $i\leq k$, are the right sides of all implications in the non-binary part of the optimum basis. Indeed, it follows from Theorems \ref{W}(II) that the total size $L_O^{nb}$ of left sides of implications from the non-binary part of any optimum basis is a fixed parameter, and it follows from Theorem  \ref{rs-bin} that the same is true for the total size $R_O^b$ of right sides of the binary part. The total size $L_O^b$ of left sides of the binary part is also fixed, since it is given by the number of implications in the binary part. $R_O^{nb}$ complements $L_O^{nb}+R_O^b+L_O^b$ to the full size of the optimum basis, from which the observation follows.

It is proved in Theorem \ref{opt-nCar} that every affine convex geometry has $R_O^{nb}=k$, where $k$ is the number of implications in the non-binary part of the canonical basis. Hence, every other optimum basis should have one-element conclusions in its non-binary part.

\end{proof}

Firstly, we note that the geometries with $n$-Carousel property include the class $\CoX$, due to Lemma \ref{good}, but they are not reduced to this class. The result of Theorem \ref{opt-nCar}  for the class 
$\CoX$ was also proved in K.~Kashiwabara and M.~Nakamura \cite{N12}.

\begin{exm}
Consider a convex geometry $\G$ defined by the canonical basis $\Sigma_C=\{abc\rto xz, acx\rto z, z\rto x\}$. Apparently, this geometry satisfies the $2$-Carousel property, but it cannot be represented as $\Co (R^2,G)$, because the latter geometry is atomistic, while $\G$ has the binary implication $z\rto x$. According to Theorem \ref{opt-nCar}, an optimum basis of this geometry is either of the following two: $\{abc\rto z, acx\rto z, z\rto x\}$, or $\{abc\rto x, acx\rto z, z\rto x\}$.\\
\end{exm}

Secondly, we note that the $n$-Carousel property in Definition \ref{nCar} is stronger than the version introduced in \cite{A08}. In particular, the result of \cite{A08} that every subgeometry of the geometry with the $n$-Carousel property satisfies this property is no longer true under the new definition. This happens because a subgeometry of the geometry with the Carath\'eodory number $n$ may have Carath\'eodory number $<n$. This is illustrated in the following example.

\begin{exm}
Consider $5$-point configuration $A=\{a,b,c,x,z\}$ on a plane $R^2$, where $a,b,c$ form a triangle with points $x,z$ inside, so that $x$ is also in triangle $abz$, and $z$ is in triangle $acx$. Then the canonical basis of convex geometry $\G=\Co (R^2,A)$ is $\Sigma_C=\{abc\rto xz, acx\rto z, abz\rto x\}$. According to Theorem \ref{opt-nCar}, the optimum basis will be either of two: $\{abc\rto z, acx\rto z, abz\rto x\}$ or $\{abc\rto x, acx\rto z, abz\rto x\}$.

Now consider the geometry $\G_1$ defined on $A$ by the following implications $\Sigma = \{a\rto c, ab\rto xz,ax\rto z\}$. In fact, one can verify that $\G_1$ is obtained from $\G$ by adding the implication $a\rto c$. Moreover, the closure lattice of $\G_1$ is a sublattice of closure lattice of $\G$. Thus, $\G_1$ is a sub-geometry of $\G$, in terminology of \cite{A08}.

While geometry $\G$ satisfied $3$-Carath\'eodory and $3$-Carousel property, $\G_1$ has the stronger $2$-Carath\'eodory property. In the old definition of \cite{A08}, $\G_1$ still satisfies $3$-Carousel property, which is in this case simply equivalent to $2$-Carath\'eodory property. But $\G_1$ fails the $3$-Carousel under Definition \ref{nCar}, since $x,z \in \phi(\{a,b\})$ in $\G_1$, while $x \not \in \phi(\{z\}\cup A')$, for any proper subset $A'\subset \{a,b\}$.

\end{exm}

Thirdly, we note that geometries of the form $\CoX$ are an essential source of closure systems outside the $CQ$-class of Boolean functions, for which an optimum basis can be effectively found, as shown in \cite{BCKK}. According to definition, a closure system (a Horn Boolean function) $\<G,\phi\>$ is $CQ$, or \emph{component quadratic}, if it has basis $\Sigma=\{A_C\rto B_C: C \in \C\}$ such that $A_C$ has no more than one element from $\Sigma$-component of $b$, for every $b \in B_C$. By a $\Sigma$-component of element $b$ we mean all elements $b' \in X$ such that $b\rto^\Sigma b'$ and $b'\rto^\Sigma b$. Here $b\rto^\Sigma b'$ means that $(b,b')$ is in the transitive closure of the relation 

$\Box_\Sigma=\{(x,y)\in X^2: x \in A_C, y \in B_C, (A_C\rto B_C)\in \Sigma\}$.

\begin{figure}[htbp]\label{pic1}

\begin{center}

\includegraphics[height=2in,width=3in]{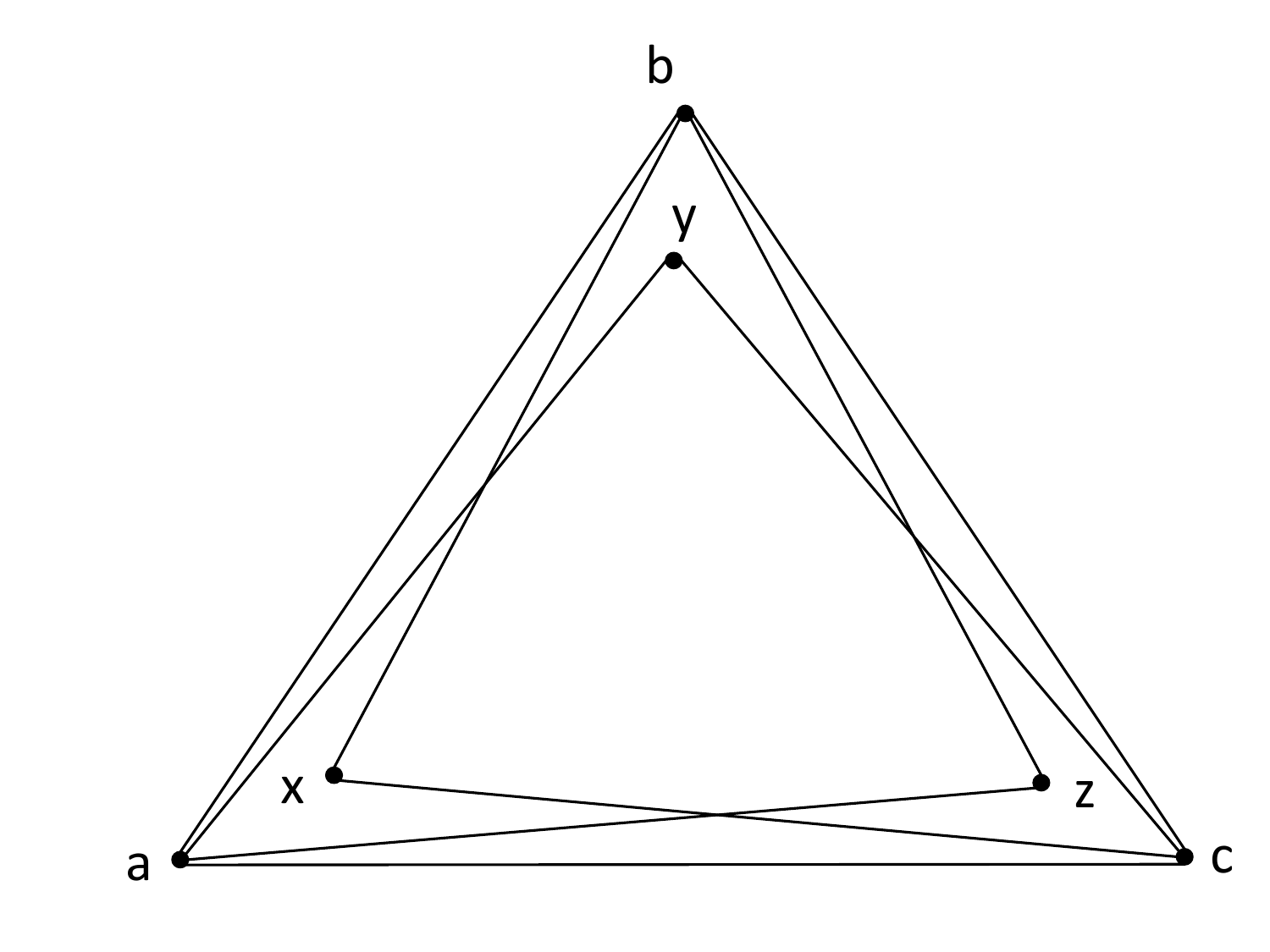}

\caption{Example \ref{notQC}}

\end{center}

\end{figure}

\begin{exm}\label{notQC}
Consider 6-point configuration $G=\{a,b,c,x,y,z\}$ in $R^2$ given on Figure \ref{pic1}, where $x,y,z$ are inside triangle $abc$. Convex geometry $\G=\Co (R^2,G)$ is given by the following canonical basis: $\Sigma_C=\{abc\rto xyz, abz\rto xy, acy \rto xz, bcx \rto yz, ayz\rto x, bxz\rto y, cxy\rto z\}$. 

According to Theorem \ref{opt-nCar} and Corollary \ref{left-opt}, any optimum basis for $\G$ will have the same premises as $\Sigma_C$ and will contain the implications $ayz\rto x, bxz\rto y, cxy\rto z$. This implies that $x,y,z$ are in the same $\Sigma$-component, for every optimum basis. On the other hand, each of these three implications have two elements from this component in the premise.

\end{exm}


\section{Convex geometries without $D$-cycles}\label{Dgeom}

In this section we establish that another subclass of convex geometries has tractable optimum bases.

\begin{df}
Call a closure system $\<G,\phi\>$ a $D$-\emph{geometry}, if it is a convex geometry and does not have $D$-cycles.
\end{df}

\begin{prop}
A closure system is a $D$-geometry iff its closure lattice is meet distributive and lower bounded.
\end{prop}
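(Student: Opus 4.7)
The proof is essentially an assembly of two well-known characterizations already used in the Preliminaries, so my plan is to treat it as a bookkeeping exercise rather than a new argument.

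First, I would recall the characterization of finite convex geometries by their closure lattices: a finite closure system $\la G,\phi\ra$ is a convex geometry if and only if $\op{Cl}(G,\phi)$ is meet-distributive (equivalently, locally distributive in the lattice-theoretic terminology used in the excerpt). One direction is the lemma of Dilworth cited in Section 3 (and also referenced as \cite{D88}, \cite{AGT03}, \cite{EdJa}), asserting that every finite convex geometry yields a meet-distributive closure lattice. The converse — that any standard closure system with a meet-distributive closure lattice is a convex geometry — is the classical Edelman--Jamison result, and is already implicit in the excerpt via Corollary~1.10 of \cite{AGT03} referenced in the proof of Corollary~\ref{left-opt}. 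Since convex geometries are automatically standard, this gives a clean biconditional.

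Second, I would invoke the equivalence explicitly stated in the paragraph that introduces $D$-cycles: a standard closure system is without $D$-cycles if and only if its closure lattice is lower bounded. Convex geometries are standard, so this equivalence applies directly here. Combining the two characterizations yields the proposition: $\la G,\phi\ra$ is a $D$-geometry (a convex geometry without $D$-cycles) exactly when $\op{Cl}(G,\phi)$ is simultaneously meet-distributive (from being a convex geometry) and lower bounded (from having no $D$-cycles).

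I do not expect any genuine obstacle, since both halves are stated or cited in the preliminaries. The only care needed is to explain that the two conditions combine without interference — that is, meet-distributivity of $\op{Cl}(G,\phi)$ already entails join-semidistributivity, so lower-boundedness in the theorem of McKenzie (which is usually phrased for join-semidistributive lattices) is applicable, and conversely meet-distributivity together with lower-boundedness is consistent. I would therefore conclude with one sentence pointing out that meet-distributive lattices are join-semidistributive, so that the lower-bounded condition sits inside the meet-distributive class as a proper subclass, consistent with the remark preceding the definition of $D$-geometry.
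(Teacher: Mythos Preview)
The paper states this proposition without proof, treating it as an immediate consequence of the two characterizations laid out in the Preliminaries: that the closure lattice of a convex geometry is meet-distributive, and that closure systems without $D$-cycles are precisely those with lower bounded closure lattices. Your proposal reconstructs exactly this intended argument, and it is correct; the only minor caveat is that the proposition as phrased says ``closure system'' rather than ``standard closure system,'' so for the converse direction you should note (as the paper implicitly does) that a meet-distributive closure lattice forces the representation to be standard and zero-closed, hence a genuine convex geometry.
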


One important subclass of $D$-convex geometries was considered in \cite{HK} under the name \emph{acyclic Horn Boolean functions}\footnote{Note that within the class of convex geometries, acyclic and quasi-acyclic Horn Boolean functions are equivalent concepts.} and in \cite{W94}, under the name $G$-\emph{geometries}.

Essentially, both can be defined as follows. Let $(P,\leq)$ be any partially ordered set. Define a closure system on $P$ by any set of implications $\Sigma=\{A_k\rto B_k: k\leq n\}$ so that for every $a \in A_k$ and $b \in B_k$ we have $b \leq a$. Following L.~Santocanale and F.~Wehrung \cite{SW14},  a closure operaotr defined via such a set of implications will be called \emph{of poset type}, and we say that the implications of $\Sigma$ are \emph{compatible} with $(P,\leq)$.

\begin{lem}\label{poset-def}

Let $A\rto B$ be any implication from the $D$-basis for an operator of poset type, with respect to some poset $(P,\leq)$. Then $b\leq a$, for every $a \in A$ and $b \in B$. In particular, $\la P,D^{tr}\ra$, where $D^{tr}$ is the transitive closure of the $D$-relation, is a sub-poset in $(P,\leq)$.

\end{lem}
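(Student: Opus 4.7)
My plan is a proof by contradiction against the minimality condition defining the $D$-basis. The binary case $A=\{x\}$ is handled by a simple propagation principle that I would record first: for every $X\subseteq P$ and every $x\in\phi(X)$, there exists $y\in X$ with $x\leq y$. This follows by induction on the forward-chaining stages of $\phi(X)$, because whenever an implication $X'\rto Y'$ of $\Sigma$ fires with $X'$ already in $\phi(X)$, the compatibility of $\Sigma$ with $(P,\leq)$ forces every element of $Y'$ to lie below every element of $X'$, and transitivity finishes the step. Applied to a binary $(x\rto b)\in\Sigma_D^b$, the principle immediately gives $b\leq x$.

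The non-binary case is the heart of the argument. Fix $(A\rto b)\in\Sigma_D^{nb}$ and an element $a_j\in A$, and suppose toward contradiction that $b\not\leq a_j$. I would then prove the following strengthening by induction on the forward-chaining stages of $\phi(A)$: every $x\in\phi(A)$ satisfies either $x\leq a_j$ or $x\in\phi(A\setminus\{a_j\})$. The base case splits on whether $x=a_j$ or $x\in A\setminus\{a_j\}$. For the inductive step, suppose a defining implication $X'\rto Y'$ of $\Sigma$ fires with $X'$ already processed and $x\in Y'$. If every $z\in X'$ lies in $\phi(A\setminus\{a_j\})$, then $Y'\subseteq\phi(X')\subseteq\phi(A\setminus\{a_j\})$ and we are done. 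Otherwise the inductive hypothesis yields some $z\in X'$ with $z\leq a_j$, and compatibility of the firing implication gives $x\leq z\leq a_j$.

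Applying the strengthening to $x=b$ and using $b\not\leq a_j$ yields $b\in\phi(A\setminus\{a_j\})$; that is, the implication $A\setminus\{a_j\}\rto b$ holds in $\la P,\phi\ra$. Taking $C=\emptyset$ in the defining condition of the $D$-basis, this contradicts $(A\rto b)\in\Sigma_D$, so $b\leq a_j$ for every $a_j\in A$, as required.

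For the final assertion, the first part says $D\subseteq{\leq}$ as subsets of $P\times P$, hence $D^{tr}\subseteq{\leq}$; antisymmetry of $D^{tr}$ is inherited from that of $\leq$, and transitivity is built into the definition, so $\la P, D^{tr}\ra$ is a sub-poset of $(P,\leq)$. The main obstacle, in my view, is the bookkeeping of the inductive step: one must index the induction on the stage at which elements enter $\phi(A)$ rather than on particular implications, so that the dichotomy ``$z\leq a_j$ or $z\in\phi(A\setminus\{a_j\})$'' is actually available for every premise element at the moment a given implication fires.
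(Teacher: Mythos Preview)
Your argument is correct. The propagation principle and the strengthened dichotomy for the non-binary case are both valid inductions on the forward-chaining stages of $\phi$ computed from the defining compatible implication set $\Sigma$, and the contradiction via $C=\emptyset$ in the $D$-basis minimality condition is exactly what is needed, since that condition forces $A$ to be an irredundant cover of $b$.

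Your route, however, is genuinely different from the paper's. The paper does not argue directly: it invokes results of Bertet--Monjardet to assert that the \emph{unit canonical basis} (the set of prime implicates) of a poset-definable system is itself compatible with $(P,\leq)$, and then observes (citing \cite{ANR11}) that the $D$-basis is a subset of the unit canonical basis, so the conclusion is inherited. What the paper's approach buys is brevity and modularity---the work is delegated to structural facts about prime implicates already in the literature. What your approach buys is self-containment: you never leave the closure system at hand, and the forward-chaining dichotomy ``$x\leq a_j$ or $x\in\phi(A\setminus\{a_j\})$'' makes transparent \emph{why} dropping $a_j$ cannot lose $b$ unless $b\leq a_j$. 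Your proof would stand even without access to the Bertet--Monjardet machinery, at the cost of a page of induction bookkeeping that the paper avoids.
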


\begin{proof}

It is straightforward to show that the canonical direct unit basis, which is a center of discussion in K.~Bertet and B.~Monjardet \cite{BM}, is compatible with the poset $(P,\leq)$. Indeed, just use Proposition 4 and Theorem 15 from \cite{BM}. Since  the $D$-basis is a subset of the canonical direct unit basis, see \cite[Lemma 8]{ANR11}, we get the desired conclusion.

\end{proof}

\begin{cor}\label{D-geom}

Every closure system $\<P,\phi\>$ of poset type is a $D$-geometry.

\end{cor}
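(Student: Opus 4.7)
I would break the statement into its two halves: that $\langle P,\phi\rangle$ is a convex geometry and that it has no $D$-cycles. The second half is essentially free from Lemma~\ref{poset-def}: if $bDa$, then some non-binary $(A\rto b)$ lies in the $D$-basis with $a\in A$, whence $b\leq a$ by the lemma; disjointness of premises and conclusions forces $b\neq a$, so $b<a$ strictly. Hence the $D$-relation is contained in the strict poset order, whose transitive closure is irreflexive, ruling out cycles.

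For the convex geometry part, my plan is to apply the three-condition characterization of the family $\mathcal F$ of closed sets recalled in the Preliminaries. Conditions (1) and (2) are immediate: $\emptyset\in\mathcal F$ because every defining implication has a nonempty premise, and intersection-closure holds in any closure system. The work is in condition (3): for every closed $X\neq P$, produce $a\in P\setminus X$ with $X\cup\{a\}$ closed.

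The key move is to choose $a$ to be a \emph{minimal} element of $P\setminus X$ under the restriction of $\leq$; such an $a$ exists by finiteness. I would then argue that $X\cup\{a\}$ is automatically closed as follows. Suppose some implication $(A\rto B)$ of the defining set $\Sigma$ fires with $A\subseteq X\cup\{a\}$ to produce $b\in B\setminus(X\cup\{a\})$. Compatibility of $\Sigma$ with $(P,\leq)$ gives $b\leq a'$ for every $a'\in A$. Had $A\subseteq X$, the implication would already force $b\in\phi(X)=X$, contradicting $b\notin X$. Hence $a\in A$, so $b\leq a$, and since $b\neq a$ by disjointness of premise and conclusion, $b<a$. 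But $b\in P\setminus X$ now contradicts the minimality of $a$. Thus no new element can be produced at the first step of the closure computation, so the computation stabilizes at $X\cup\{a\}$, which is therefore closed.

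The subtle point, more than any technical obstacle, is recognising that ``minimal'' is the right choice of $a$: a maximal element would not suffice, because the relation $b<s$ for some $s\in X$ with $b\notin X$ is perfectly consistent with poset-compatibility and cannot be excluded on its own. Minimality neutralises precisely this possibility, forcing any candidate new element $b$ to lie strictly below a minimal element of $P\setminus X$ and hence inside $X$.
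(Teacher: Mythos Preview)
Your argument is correct. The $D$-cycle half matches the paper exactly. For the convex-geometry half you take a genuinely different route: the paper does not use the three-condition description of the closed-set family at all, but instead observes (essentially by the same downward-firing property of the implications you exploit) that whenever $y\in\phi(X)$ one already has $y\in\phi(X')$ for some $X'\subseteq X$ with $y\leq x'$ for every $x'\in X'$, and from this the anti-exchange axiom is checked directly. Your approach trades that verification for the ``add a minimal element of the complement'' trick, which is fully rigorous and nicely self-contained; the paper's approach is terser and isolates a structural fact (closures are witnessed from above) that is useful in its own right but leaves the passage to anti-exchange to the reader. One small stylistic point: once you have shown that no implication of $\Sigma$ fires on $X\cup\{a\}$, that set is already closed by definition of the closed sets of $\Sigma$; the remark about the computation ``stabilizing after one step'' is correct but unnecessary.
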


\begin{proof}

Indeed, it follows from Lemma \ref{poset-def} that $\<P,\phi\>$ does not have $D$-cycles. It is easy to check also that $y \in \phi(X)$, $X\subseteq P$, implies $y \in \phi(X')$, for some $X'\subseteq X$ such that $y\leq x'$, for all $x'\in X'$. From this, the anti-exchange property of convex geometry directly follows.  

\end{proof}

It was observed in M. Wild \cite{W94} that closure systems of poset type (called there as $G$-geometries) are convex geometries. Corollary 15 in the same paper also established that all optimum implicational bases of $G$-geometries have no directed cycles. 

In the terminology of \cite{HK}, also given at the end of section \ref{CG}, prior to Example \ref{notQC}, this is equivalent to say that in such a system, every $\Sigma$-component of any optimum basis $\Sigma$ consists of a single element. 

Thus, Corollary \ref{D-geom} implies that acyclic Horn Boolean functions of \cite{HK} and $G$-geometries of \cite{W94} are $D$-geometries.

On the other hand, there exist $D$-geometries which are not of poset type.

\begin{exm}\label{notPoset-def}
Consider a closure system defined by its optimum basis $\Sigma = \{a_1a_2\rto b_1, b_1b_2\rto c_1,c_1c_2\rto d, c_1\rto a_1, b_2\rto a_1, d\rto a_2, c_2\rto a_2\}$. It is straightforward to check that the closure system defined by $\Sigma$ is a convex geometry, and examining the non-binary part, one does not find $D$-cycles. Hence, it is a $D$-geometry. On the other hand, this system has a non-trivial component $\{a_1,b_1,c_1,d,a_2\}$, and thus it cannot be of the poset type.

Moreover, the first implication has two elements from the $\Sigma$-component of $b_1$, so this is not a $CQ$-system.

\end{exm}
Combination of Corollary \ref{D-geom} and Example \ref{notPoset-def} gives the following statement.

\begin{cor} Closure systems of poset type ($G$-geometries) form a proper subclass of $D$-geometries.
\end{cor}

The next statement combines results of \cite{AN12} and section \ref{CG}. We need to recall the definition of basis $\Sigma_{FOE}$ introduced for closure systems without $D$-cycles in \cite[Definition 70]{AN12}. Letter ``$F$" in the notation comes from the $F$-basis, since the binary part of $\Sigma_{FOE}$ is defined as in $F$-basis, see the end of section \ref{CG}. Thus, if $(x\rto Y) \in \Sigma_{FOE}$, then $\phi(Y)=\phi(\{x\})\setminus \{x\}$.

Letters ``$OE$" in the notation come from ``optimized $E$-basis". The $E$-basis was defined in \cite{ANR11}, for the systems without $D$-cycles, and it was further analyzed in \cite{AN12}, for its connection with the canonical basis. The non-binary part of the $E$-basis has implications $X_K\rto Y_O$, where $X_K$ is defined as in the $K$-basis, i.e. $X_K\subseteq C$ is a minimal order generator of essential element $\phi(C)$, for some $C \in \mathcal{C}$. The conclusion $Y_O\subseteq Y_K$, is a subset of $Y_K$, the right side in the $K$-basis. Element $y \in Y_K$ is included in $Y_O\subseteq Y_K$, only if there is no other $C'\in \mathcal{C}$, $|C'|>1$, such that $y\in \phi(C')\setminus C'$ and $\phi(C')\subset \phi(C)$. 

\begin{thm}\label{optDgeom}

If $\<G,\phi\>$ is a $D$-geometry, then its basis $\Sigma_{FOE}$ is optimum.

\end{thm}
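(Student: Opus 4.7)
The approach is to decompose the size of any basis $\Sigma$ as
\[
s(\Sigma)=L^b(\Sigma)+L^{nb}(\Sigma)+R^b(\Sigma)+R^{nb}(\Sigma),
\]
separating left and right sides of the binary and non-binary parts, exactly as in the proof of Corollary \ref{one}. Three of the four summands are invariants of $\<G,\phi\>$, so the optimality of $\Sigma_{FOE}$ reduces to (i) checking that it attains each of these three invariants, and (ii) showing that it also minimises $R^{nb}$. Step (ii) is where the hypothesis of no $D$-cycles will enter.

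For (i), the summand $L^b(\Sigma)$ equals the number of binary critical sets (every basis contains exactly one implication per critical set by Theorem \ref{DG}(2), and a binary critical contributes a singleton premise), which $\Sigma_{FOE}$ attains trivially. The summand $L^{nb}(\Sigma)=\sum_{|C|>1}k_C$ is invariant by Theorem \ref{W}(II), and $\Sigma_{FOE}$ attains it because, in a convex geometry, $X_K=Ex(\phi(C))$ is the unique irredundant generator of $\phi(C)$ by Theorem \ref{conGopt}. Finally, $R^b(\Sigma)$ is invariant by Theorem \ref{rs-bin}, and $\Sigma_{FOE}$ attains it because its binary conclusions are $Ex(\phi(\{x\})\setminus\{x\})$, the right sides singled out in Lemma \ref{RSconvgeo}.

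For (ii), let $Y_O^{(C)}$ denote the conclusion in $\Sigma_{FOE}$ indexed by a non-binary critical $C$. I would show, for each pair $(C,y)$ with $y\in Y_O^{(C)}$, that in every other basis $\Sigma^*$ the element $y$ must appear in the conclusion of the implication $X_C^*\rto Y_C^*\in\Sigma^*$ satisfying $\sigma(X_C^*)=C$. By Theorem \ref{DG}(2) and Theorem \ref{W}(I) such an implication exists and is essentially unique, so the map $(C,y)\mapsto(\text{occurrence of }y\text{ in }Y_C^*)$ is an injection from the multiset $\{(C,y):y\in Y_O^{(C)}\}$ into the right-side occurrences of the non-binary part of $\Sigma^*$, giving $R^{nb}(\Sigma_{FOE})\leq R^{nb}(\Sigma^*)$. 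The $Y_O$-selection rule was designed precisely so that $y\in Y_O^{(C)}$ excludes $y$ from $\phi(C')\setminus C'$ for any non-binary critical $C'$ with $\phi(C')\subsetneq \phi(C)$, which blocks the derivation of $y$ through a strictly smaller non-binary essential element; a parallel observation (binary conclusions sit strictly below their premises in $\geq_\phi$) blocks derivations that try to route through the binary part.

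I expect the main obstacle to be the last implication: showing that if $y\notin Y_C^*$ then $y$ cannot be recovered by $\Sigma^*$ at all. The absence of $D$-cycles is indispensable here, since the relation $D^{tr}$ provides a well-founded rank on non-binary critical sets of a $D$-geometry. I would therefore proceed by induction on the $D^{tr}$-rank of $C$: in the inductive step, any implication of $\Sigma^*$ producing $y$ en route from $X_C^*$ to $\phi(X_C^*)$ must be indexed by some $C''$ of strictly lower $D^{tr}$-rank with $\phi(C'')\subsetneq\phi(C)$ and $y\in\phi(C'')\setminus C''$, directly contradicting $y\in Y_O^{(C)}$. Carrying out this induction carefully is the only real calculation, and it parallels the analysis of the $E$-basis in \cite{AN12}, enhanced by the convex-geometric facts from Section \ref{CG} that make the binary part inert for this argument.
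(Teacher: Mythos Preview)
Your approach is essentially the same as the paper's: decompose $s(\Sigma)$ into binary/non-binary left and right parts, use Theorem~\ref{conGopt} for left-side optimality, Lemma~\ref{RSconvgeo} (equivalently Theorem~\ref{rs-bin}) for the binary right side, and then handle the non-binary right side separately using the no-$D$-cycles hypothesis. The paper's proof is shorter only because it outsources the last step entirely to \cite[Corollary~57]{AN12}, whereas you sketch the content of that corollary --- the injection $(C,y)\mapsto(\text{occurrence of }y\text{ in }Y_C^*)$ together with a $D^{tr}$-rank induction is indeed the mechanism behind that result. One small point to tighten in your sketch: the $Y_O$-selection rule says $y\notin\phi(C')\setminus C'$, not $y\notin\phi(C')$, so when you argue that a smaller non-binary $C''$ cannot produce $y$ you must also rule out the case $y\in C''$; this is handled by observing that then $y$ would need to be present \emph{before} the implication for $C''$ fires, which pushes the problem down to a yet smaller essential element and the induction still goes through.
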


\begin{proof}

The premises of $\Sigma_{FOE}$ and the $K$-basis coincide by the definition. Since $D$-geometry is a convex geometry, one can apply Theorem \ref{conGopt} to claim that $\Sigma_{FOE}$ is left-side optimum.

The right sides of the binary implications are optimum due to Lemma \ref{RSconvgeo}.


Corollary 57 in \cite{AN12} shows that $\Sigma_{FOE}$ is also optimum in its non-binary right side. This implies that $\Sigma_{FOE}$ is left-side optimum and right-side optimum, whence it is optimum. 

\end{proof}

It was shown in \cite[Lemma 62]{AN12} that, for any closure system $\< G, \phi\>$ and its canonical basis $\Sigma_C$, it requires time $O(s^2(\Sigma_C))$ to recognize whether the system is without $D$-cycles and obtain its $\Sigma_{OE}$ basis. Obtaining binary implications of $F$-basis amounts to finding a (unique) minimal order generator of closed sets $\phi(\{x\})\setminus\{x\}$, for each $x \in G$. This can be done at most in time $O(|G|^2)$, similarly to algorithm of \cite[Proposition 30]{AN12}. Thus, the following can be deduced from Theorem \ref{optDgeom}.

\begin{cor} The optimum basis of every $D$-geometry can be obtained from its (canonical) basis in time polynomial in size of the basis.
\end{cor} 

We can mention two well-known subclasses of convex geometries without $D$-cycles.

The first contains $\op{Sub}_\wedge(S)$, the convex geometries of subsemilattices of a $\wedge$-semilattice $S$, where the canonical basis is given by $\{ab\rto c: a\wedge b=c, a,b,c\in S\}$. It was proved in \cite{A96} that finite lattices $\op{Sub}_\wedge(S)$ are lower bounded. Moreover, they are atomistic, which guarantees (with the addition of the \jsd\ law) that they are convex geometries, see \cite{AGT03}.

Similarly, the lattice $\op{O}(P)$ of suborders of a partially ordered set $\la P,\leq\ra$ is lower bounded, by result of Sivak \cite{Si78}. It gives the closure lattice of a convex geometry defined on set $X = \{(a,b)\in P^2: a<b\}$. The canonical basis in this case is $\Sigma_C=\{ab\rto c: a=(x,y),b=(y,z),c=(x,z) \in X\}$.

In both cases, the canonical basis cannot be refined, so it is already optimum.\\

In conclusion of this section we also mention the connection between lattices of the poset type and supersolvable lattices.

Supersolvable lattices were introduced by R. Stanley in \cite{S72}. The motivating examples were lattices of subgroups of supersolvable finite groups.

\begin{df}\label{supersolv}
A maximal chain in lattice $L$ of finite height is called an $M$-chain, if together with any other maximal chain it generates a distributive sublattice in $L$. Lattice is called \emph{supersolvable}, if it has an $M$-chain.
\end{df}

The key combinatorial description of supersolvable lattices was given in P. McNamara \cite{McN03}.

It was shown in K.~Adaricheva \cite{A13} that every supersolvable and \jsd\ lattice must be meet distributive, i.e. it must be a closure lattice of a convex geometry. Moreover, it was observed in K. Kashiwabara and M. Nakamura \cite{KN12}, based on work of D. Armstrong \cite{Arm09}, that convex geometry is supersolvable iff it is of the poset type. The combination of these two results gives a full description of \jsd\ supersolvable lattices.

\section{Convex geometries of order convex subsets}

Let $\<P,\leq\>$ be a partially ordered set.
Denote $\Co(P)$ convex geometry $\<P,\phi\>$, where $\phi(X)$ is a smallest convex subset of $P$ containing $X\subseteq P$. By the definition, a subset $Y\subseteq P$ is \emph{convex}, if $a\leq c\leq b$ and $a,b\in Y$ implies $c \in Y$.

It is easy to verify that the canonical basis of any convex geometry $\Co(P)$ does not have a binary part and comprises implications $xy\rto Z$, where $x<y$ in $P$, and $Z=[x,y]=\{z\in P: x<z<y\}$. Similarly, it is easy to describe implications of the $D$-basis: $xy \rto z$, for some $x<z<y$.

We observe that the this subclass of convex geometries is rather disjoint from two others we discussed so far.
It is more often than not convex geometry $\Co(P)$ contains $D$-cycles. Indeed, whenever we have $4$-element chain $a<b<c<d$ in $\<P,\leq\>$, we will get two implications in the $D$-basis: $bd\rto c$ and $ac\rto b$. Thus, $bDcDb$ is a $D$-cycle. 

Similarly, it is rare that $\Co(P)$ satisfies $n$-carousel property, for any $n$. If  $a<b,c < d$ and $b,c$ are incomparable, then $ad\rto bc$, but none of next implication holds: $ab\rto c$, $db\rto c$, $ac\rto b$, $dc\rto b$, i.e. $1$-carousel property fails.

The canonical basis of $\Co(P)$ is already left-side optimized. Thus, the task of optimizing the basis is to choose, for every implication $xy\rto Z$, a subset $Z'\subseteq Z$ so that implication $xy\rto Z'$ will belong to an optimum basis. 

For every $a<b$ in poset $\<P,\leq\>$, let denote $Cp[a,b]$ the number of connected components of sub-poset on $[a,b]\setminus\{a,b\}$. Connected component of any poset is defined as connected component of the graph of the cover relation. Thus, we may partition $[a,b]\setminus\{a,b\}$ into connected components: $[a,b]\setminus\{a,b\}=\bigcup \{C_i: i\leq Cp[a,b]\}$.

We claim that the cardinality of $Z'$ in implication $xy\rto Z'$ of the optimum basis for  $\Co(P)$  is fully defined by $Cp[x,y]$.

\begin{lem}\label{CoP}
Let $\Co(P)$ be a convex geometry of order convex subsets, and let $\Sigma_O$ be one of its optimum bases. For each implication $xy\rto Z$ of its canonical basis there exists unique implication $(xy \rto Z')\in \Sigma_O$, where $Z'\subseteq Z$ contains exactly one member of each connected component of $[x,y]\setminus\{x,y\}$. Moreover, each implication in $\Sigma_O$ is obtained this way from implication of the canonical basis.
\end{lem}
\begin{proof}
First, we need to show that $xy\rto Z$ follows from $xy\rto Z'$ and other implications of the new basis $\Sigma_O$. We observe that inference of $xy\rto z$, for $z \in Z$, from basis $\Sigma_O$ (or, any other basis), will include only implications $ab\rto c$, where $a,b,c\in [x,y]$. Compare with the Proposition 1 in \cite{AN12}.

We will argue by the induction on the height $k$ of $[x,y]$. There is no implications in the basis corresponding to $k=1$, i.e., when $x$ is covered by $y$. If $k=2$, i.e., $[x,y]$ contains the chains of maximum 3 elements, then $[x,y]\setminus\{x,y\}$ is an anti-chain $Z=\{z_1,\dots, z_n\}$. In particular, $Cp[x,y]=n$. We claim that $xy\rto Z$ from canonical basis will also be in every optimum basis. Indeed, $xy \rto z_i$  does not follow from $xy\rto Z\setminus\{z_i\}$, and there is no other implication $ab\rto c$ in $\Sigma_O$ with $a,b \in [x,y]$. 

Now assume that the height of $[x,y]$ is $k+1$, and, for every $[a,b]$ of height at most $k$, it is shown that $ab\rto [a,b]$ follows from $ab\rto Z'$, with some choice $Z'$ of representatives from the connected components of $[a,b]\setminus\{a,b\}$. Let $C$ be a connected component of $[x,y]\setminus\{x,y\}$.
Choose any $c \in C$. We claim that $xy\rto C$ follows from $xy\rto c$. Pick any $d \in C\setminus \{c\}$. Then one can find a sequence $c,m_1,m_2,\dots,m_p,d$, where $m_i$, $i\leq p$, are maximal or minimal elements of sub-poset on $[x,y]\setminus\{x,y\}$, and two consecutive elements of the sequence are comparable. Without loss of generality we may assume that, say, $c<m_1 > m_2 <\dots >m_p < d$. In this case, $cy\rto m_1$, $m_1x\rto m_2$, \dots, $m_py\rto d$ follow from the implications of $\Sigma_O$, by inductive hypothesis. Hence, $xy \rto d$ follows from $xy \rto c$ and other implications of $\Sigma_O$.   

Thus, having a single representative from each connected component will be enough to deduce the implication $xy\rto Z$ from the canonical basis.

It remains to note that we must have at least one representative from each connected component. Suppose no element from some connected component $C\subseteq [x,y]\setminus\{x,y\}$ is included into $Z'$.
The inference of $xy\rto c$, where $c\in C$ will require implication $ab\rto c$, where $a,b \in [x,y]$ and $\{a,b\}\not = \{x,y\}$.
W.l.o.g. assume $x<a<y$, $b=y$ and $a \not \in C$. Then $a < c$, which contradicts that connected component $C$ does not contain $a$.
\end{proof}

\begin{cor} The optimum basis of any convex geometry $Co(P)$ of order convex subsets of poset $P$ can be computed from the canonical basis $\Sigma_C$ in time polynomial in $s(\Sigma)$.
\end{cor}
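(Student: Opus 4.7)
The plan is to turn Lemma \ref{CoP} into an explicit polynomial-time algorithm. Given $\Sigma_C=\{xy\rto Z_{xy}\}$, the lemma prescribes that an optimum basis is obtained by replacing each conclusion $Z_{xy}$ with a transversal of the connected components of the cover graph of the subposet on $Z_{xy}$; hence the task reduces to extracting these connected components from $\Sigma_C$ in time polynomial in $s(\Sigma)$.

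The approach has two technical steps. First, the comparability relation on each $Z=[x,y]\setminus\{x,y\}$ can be read off directly from $\Sigma_C$: since every $a\in Z$ satisfies $x<a<y$, for any $a,b\in Z$ one has $a<b$ in $P$ if and only if $a\in\phi(\{x,b\})\setminus\{x,b\}$, and this can be checked by inspecting the (at most one) implication of $\Sigma_C$ whose premise is $\{x,b\}$, treating the closure as $\{x,b\}$ if no such implication exists. Second, since $Z$ is order-convex in $P$, the cover relation on $Z$ as a subposet coincides with the restriction of the cover relation of $P$ to $Z$, and a comparable pair $a<b$ in $Z$ is a cover pair precisely when $\Sigma_C$ contains no implication with premise $\{a,b\}$. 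The resulting Hasse graph on $Z$ is then constructed in polynomial time, its connected components are found by breadth-first search in linear time, and one element per component is selected to form the new conclusion $Z'$.

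Repeating this construction for each of the $|\Sigma_C|\leq s(\Sigma)/2$ implications and assembling the output yields an optimum basis by Lemma \ref{CoP}, with total running time polynomial in $s(\Sigma)$. The main subtlety is that cover pairs $a\lessdot b$ in $P$ generate no implication in $\Sigma_C$ on their own, so their detection appears problematic at first; the observation above circumvents this obstacle by using the endpoints $x,y$ of the ambient interval as witnesses for every comparability inside $Z$, after which distinguishing covers from non-covers becomes a matter of checking a single premise.
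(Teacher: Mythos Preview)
Your proof is correct and follows the same approach as the paper: invoke Lemma~\ref{CoP} and argue that the connected components of each $[x,y]\setminus\{x,y\}$ can be computed in time polynomial in $s(\Sigma)$. The paper's own proof is a one-line appeal to this observation without further detail; you supply the implementation details, in particular the point that comparabilities inside $Z$ can be recovered from $\Sigma_C$ by looking up the implication with premise $\{x,b\}$ (using the endpoint $x$ as a witness), thereby handling the issue that cover pairs of $P$ do not themselves appear as premises in $\Sigma_C$.
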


Indeed, the claim follows from Lemma \ref{CoP}, and the observation that computation of connected components of each sub-poset $[x,y]\setminus \{x,y\}$, $x<y$, $x,y\in P$, will require the polynomial time of $s(\Sigma)$.

\section{Other convex geometries with the tractable optimum bases}

The $CQ$-closure systems in \cite{BCKK} give another example of tractable case, and this class has non-empty intersection with the class of convex geometries. For example, convex geometry given by the canonical basis
$\Sigma_C=\{a_1a_2a_3\rto xyz,a_1a_2x\rto y, a_2a_3y\rto x\}$ is  $CQ$, because $\{x,y\}$ is the only non-trivial component, and every element in the conclusion has maximum one element from its component in the premise. On the other hand, this system has a $D$-cycle $xDyDx$, and it does not satisfy the Carousel property, since $z \not \in \phi(\{x\}\cup A')$, for any $A'\subset \{a_1, a_2, a_3\}$. It is apparently not the convex geometry of order convex sets, since the premises have more than two elements.

Note that there are two optimum bases for this system: in one the first implication of $\Sigma_C$ is replaced by $a_1a_2a_3\rto xz$, and in second by $a_1a_2a_3\rto yz$. This is different from systems with Carousel property or order convex subsets, where the non-binary implications in an optimum basis have a singleton on the right side, and any singleton in the closure of the left side can be chosen for that purpose.

Still, there are convex geometries outside of all tractable subclasses discussed in this paper.

\begin{exm}
Consider convex geometry given by the canonical basis $\{a_1a_2a_3\rto xyz, a_1xy\rto z, a_2a_3z\rto y, a_2a_3y\rto x\}$.

It is not $CQ$, since one has a non-trivial component $\{x,y,z\}$, and implication $a_1xy\rto z$ includes two elements from it in the premise. It also has $D$-cycles and it does not satisfy the Carousel rule: $x \not \in \phi(\{y\} \cup A')$, for any $A'\subset \{a_1,a_2,a_3\}$. Evidently, this convex geometry cannot be $Co(P)$, since the size of left sides of implications is greater than $2$.
\end{exm}

At the moment we are not aware of any subclass of convex geometries for which optimum basis is not tractable.
So the following problem is of importance:

\begin{pb}
Determine whether there exists a polynomial algorithm of obtaining the optimum basis from a canonical basis of arbitrary convex geometry.
\end{pb}

Another related question is a recognition problem.

\begin{pb}
Given any set $\Sigma$ of implication determine whether the closure system defined by $\Sigma$ is a convex geometry. Does there exist such algorithm that performs in time polynomially dependable on $s(\Sigma)$?
\end{pb}

While it is easy to design an algorithm based on the anti-exchange property of related closure operator, such as in \cite{YHM15}, the existence of a \emph{polynomial} algorithm remains to be an intriguing question.

\emph{Acknowledgments.}  The work on the paper was inspired by the close communication with J.B.~Nation and V.~ Duquenne. We are grateful to Laurent Beaudau who pointed to the flaw in original argument about optimum basis of geometries of convex sub-posets. The author was partially supported by AWM-NSF Mentor Travel grant N0839954. The comments of anonymous referees helped to improve the presentation of results.

\end{document}